\newcommand{\U}{\mathrm{U}}
\newcommand{\SU}{\mathrm{SU}}
\newcommand{\SO}{\mathrm{SO}}
\newcommand{\su}{\mathfrak{su}}
\newcommand{\ra}{\rightarrow}
\newcommand{\un}{\mathfrak{u}}
\newcommand{\spin}{\mathrm{Spin}}
\newcommand{\SP}{\mathrm{Sp}}
\newtheorem{thm}{Theorem}[section]
\newtheorem{prop}[thm]{Proposition}
\newtheorem{cor}[thm]{Corollary}
\newtheorem{lem}[thm]{Lemma}
\theoremstyle{definition}
\newtheorem{rem}[thm]{Remark}
\newtheorem{defn}[thm]{Definition}
\begin{document}

\title{Symplectic and K\"ahler structures on biquotients}
\author{Oliver Goertsches\footnote{Philipps-Universit\"at Marburg, email: goertsch@mathematik.uni-marburg.de}, Panagiotis Konstantis\footnote{Universit\"at Stuttgart, email: panagiotis.konstantis@mathematik.uni-stuttgart.de}, and Leopold Zoller\footnote{Philipps-Universit\"at Marburg, email: zoller@mathematik.uni-marburg.de}}

\maketitle
\begin{abstract}
  We construct symplectic structures on roughly half of all equal rank biquotients of the form
  $G//T$, where $G$ is a compact simple Lie group and $T$ a torus, and investigate Hamiltonian Lie
  group actions on them. For the Eschenburg flag, this action has similar properties as Tolman's and Woodward's examples of Hamiltonian non-Kähler actions. In addition to the previously known K\"ahler structure on the Eschenburg
  flag, we find another K\"ahler structure on a biquotient ${\mathrm{SU}}(4)//T^3$.
\end{abstract}

\section{Introduction}

Given a Lie group $G$, as well as a subgroup $H\subset G\times G$ acting freely on $G$ by left and right multiplication, the orbit space of this action is called a biquotient of $G$, denoted $G//H$. Starting with the habilitation thesis of Eschenburg \cite{MR758252} biquotients have been of interest for differential geometers, mainly because they admit Riemannian metrics with rare curvature properties: all of them can be equipped with metrics of nonnegative sectional curvature, and some even admit positive sectional curvature. \cite{MR758252, BoeckDiss}

In other types of geometries biquotients feature less prominently. Boyer, Galicki, and Mann constructed $3$-Sasakian structures on some biquotients, in particular on the $7$-dimensional Eschenburg spaces $\SU(3)//S^1$, via reduction of circle actions on the standard $3$-Sasakian sphere. Although Kapovich \cite{KapovitchRHTBiquotients} posed the question which biquotients admit a K\"ahler structure, there seems to be no literature on symplectic or K\"ahler structures on biquotients. To our knowledge, the only known such structure on a (nonhomogeneous) biquotient is implicit in the work of Eschenburg \cite[Theorem 2]{Eschenburg} and Escher and Ziller \cite{MR3205800} where they show that the $6$-dimensional Eschenburg flag $\SU(3)//S_{12}$ is a K\"ahler manifold -- see Section \ref{newexample} below. A general result that gives related information is by Singhof \cite{MR1238411} who showed that any equal rank biquotient $G//T$, where $T$ is a torus, admits a stable almost complex structure.

In this paper we construct symplectic structures on roughly half of all equal rank biquotients of compact simple Lie groups $G//T$, where $T$ is a torus, see Theorem \ref{extensionexistence} below. Given a biquotient $G//T$, and a compact connected subgroup $T\subset H\subset G\times G$, where the ranks of $T, H$ and $G$ coincide, one has a fibration
\[
H/T \longrightarrow G//T \longrightarrow G//H.
\]
In case $G//H$ admits a symplectic structure, one can use a construction of Thurston \cite{MR0402764} to obtain a symplectic structure on $G//T$. Work of Eschenburg \cite{MR758252} helped us to find the correct choices for $H$.

In Section \ref{hamiltonianactions} we observe that our new symplectic structures admit Hamiltonian Lie group actions. As an explicit example, we show that the momentum image of the Eschenburg flag $\SU(3)//S_{12}$ is a Tolman trapezoid \cite{Tolman} and compare it to that of the ordinary full flag manifold $\SU(3)/T^2$. In particular, this sheds new light on Tolman's \cite{Tolman} and Woodward's \cite{Woodward} examples of Hamiltonian non-K\"ahler  actions, see Theorem \ref{thm:tolmanwoodward}. We will compare these examples more closely in the follow-up paper \cite{1903.11684}.

In Section \ref{kaehler} we address the question of existence of K\"ahler structures on equal rank biquotients. Besides the Eschenburg flag mentioned above, we find a K\"ahler structure on one more example, namely on $\SU(4)//S_{12}$. We strongly suspect that all equal rank biquotients admit a K\"ahler structure, and support this conjecture by verifying that for low-dimensional examples the cohomology algebra satisfies the Hard Lefschetz property.

\paragraph*{Acknowledgements.} The authors would like to thank Ben Anthes, Jost-Hinrich Eschenburg, Daniel Greb, and Wolfgang Ziller for sharing their insight on the subject. We also want to express our thanks to Maximilian Schmitt for pointing out to us the relation of our results to Tolman's and Woodward's examples. The third named author is supported by the German Academic Scholarship foundation.
\section{Symplectic structures on biquotients}

Let $G$ be a compact Lie group and $H\subset G\times G$. Then $H$ acts on $G$ via $(h_l,h_r)\cdot g=h_lgh_r^{-1}$. This action has the kernel $H\cap \Delta Z$, where $\Delta Z$ denotes the diagonal of the center of $G$. When the induced action of $H/H\cap\Delta Z$ is free, the quotient of the $H$-action is a smooth manifold. The orbit space will then be called a biquotient and denoted by $G//H$. To simplify the language we will often call the $H$-action free even if only the $H/H\cap\Delta Z$-actions is free.\\
We specifically study the case of a torus $T\subset G\times G$ of maximal dimension acting freely on $G$. By maximal dimension we mean that $\dim T=\mathrm{rank}(G)$. It is our goal to endow the biquotient $G//T$ with a symplectic structure. We first present a construction by which this can be achieved under some additional assumption and later discuss to which of the cases in the classification list of simple biquotients this can actually be applied.\\

\subsection{The construction}\label{construction}
In what follows we assume that there is an equal rank extension $T\subset H\subset G\times G$ such that $H$ is connected, the natural $H$-action on $G$ is free, and the biquotient $G//H$ carries a symplectic form $\omega_B$. We have
\[
G//T\cong G\times_H H/T
\]
with $H$ acting on $H/T$ from the left and the diffeomorphisms explicitly given by
\[
T\cdot g\mapsto [g,eT],\quad [g,hT]\mapsto T\cdot h^{-1}\cdot g.
\]
We have a fiber bundle
\begin{equation}\label{eq:fibrationbiquot}
H/T\ra G//T \ra G//H.
\end{equation}
Any fiber over some orbit $H\cdot g$ can be identified with $H/T$ via the map $hT\mapsto [g,hT]\in G\times_H H/T$. This depends on the choice of a particular $g$ in the orbit $gH$, however two different choices for the identification differ only by multiplication with some element of $H$ on $H/T$. It is well known that a flag manifold $H/T$  carries a symplectic form $\omega_0$ such that the $H$-action on $H/T$ is Hamiltonian. Hence $\omega_0$ induces a symplectic form on every fiber, independent of any choices of representatives.
We want to apply the following

\begin{thm}[{\cite{MR0402764}, see also Theorem 6.1.4 in \cite{MR3674984}}]\label{Thurston}
  Let $F\ra E\ra B$ be a fiber bundle of compact manifolds with structure group $G$ in which $(F,\omega_0)$ and $(B,\omega_B)$ are symplectic. Assume further that $\omega_0$ is invariant under the structure group of the bundle and that $[\omega_0]$ lies in the image of the map $H^2(E)\ra H^2(F)$ (we always assume real coefficients in this article). Then there exists a closed $\omega_F\in\Omega^2(E)$ that restricts to the symplectic form induced on every fiber and for $C>0$ sufficiently large the form $\omega_F+C\pi^*(\omega_B)$ is a symplectic form on $E$.
\end{thm}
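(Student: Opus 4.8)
The plan is to prove the statement in two stages: first produce a closed two-form $\omega_F$ on the total space $E$ whose restriction to each fiber is the symplectic form induced there by $\omega_0$, and then show that correcting $\omega_F$ by a sufficiently large multiple of $\pi^*\omega_B$ yields a symplectic form. The two hypotheses on $\omega_0$ are used exactly in the first stage; the second stage is a soft compactness argument.

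\emph{Stage 1.} Since $E$ is compact, so is $B=\pi(E)$, and I would choose a finite trivializing cover $\{U_i\}$ of $B$ by contractible open sets, with trivializations $\phi_i\colon\pi^{-1}(U_i)\to U_i\times F$ whose transition functions take values in the structure group. By hypothesis there is a closed $\alpha\in\Omega^2(E)$ restricting on each fiber to the class $[\omega_0]$. On $\pi^{-1}(U_i)$ set $\sigma_i:=\phi_i^*\,\mathrm{pr}_F^*\omega_0$; invariance of $\omega_0$ under the structure group guarantees that on each fiber $\sigma_i$ equals the canonically induced form $\omega_0^{F_b}$, independently of $i$. Over the contractible $U_i$ the restriction $H^2(\pi^{-1}(U_i))\to H^2(F)$ is an isomorphism, so the closed form $\alpha-\sigma_i$, being fiberwise exact, is exact on $\pi^{-1}(U_i)$: write $\alpha-\sigma_i=d\beta_i$. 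Taking a partition of unity $\{\rho_i\}$ on $B$ subordinate to $\{U_i\}$, I would set
\[
\omega_F:=\alpha-\sum_i d\bigl((\pi^*\rho_i)\,\beta_i\bigr),
\]
which is closed. Restricting to a fiber $F_b$, where $\pi^*\rho_i$ is the constant $\rho_i(b)$, a short telescoping gives $\omega_F|_{F_b}=\sum_i\rho_i(b)\,\sigma_i|_{F_b}=\bigl(\sum_i\rho_i(b)\bigr)\,\omega_0^{F_b}=\omega_0^{F_b}$, as required.

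\emph{Stage 2.} Let $2n=\dim E$ and $2m=\dim B$, so $\dim F=2(n-m)$. Since $(\pi^*\omega_B)^{m+1}=0$,
\[
(\omega_F+C\pi^*\omega_B)^n=\sum_{j=0}^m\binom{n}{j}C^j\,\omega_F^{\,n-j}\wedge(\pi^*\omega_B)^j.
\]
Fix the orientation of $E$ determined by the fiber orientation from $\omega_0$ and the base orientation from $\omega_B$. At a point $p$, splitting $T_pE$ into a vertical subspace and a horizontal complement, $(\pi^*\omega_B)^m$ restricts to a volume form on the horizontal part and annihilates vertical vectors, so in $\omega_F^{\,n-m}\wedge(\pi^*\omega_B)^m$ only the purely vertical component of $\omega_F$ contributes; that component is $\omega_0^{F_p}$, which is nondegenerate, and $(\omega_0^{F_p})^{n-m}$ is the vertical volume form. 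Hence the coefficient of the highest power $C^m$, read against a fixed volume form on $E$, is everywhere positive and thus bounded below by some $\varepsilon>0$ by compactness, while the coefficients of the lower powers of $C$ are uniformly bounded. Therefore for $C$ large $(\omega_F+C\pi^*\omega_B)^n$ is nowhere zero, and since $\omega_F$ and $\pi^*\omega_B$ are closed, $\omega_F+C\pi^*\omega_B$ is symplectic.

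\emph{Main obstacle.} The delicate point is Stage 1: arranging that $\omega_F$ restricts to $\omega_0$ \emph{exactly}, not merely cohomologously, on every fiber. This is precisely where both hypotheses are indispensable — structure-group invariance of $\omega_0$ makes the local models $\sigma_i$ agree fiberwise on overlaps, while the cohomological condition supplies the global form $\alpha$ against which the local primitives $\beta_i$ are measured; dropping either one breaks the patching. Once $\omega_F$ has been constructed, Stage 2 is the routine ``nondegeneracy is open, the base is compact'' perturbation.
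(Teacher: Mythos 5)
Your proposal is correct, and it is essentially the standard Thurston construction: the paper itself gives no proof of this statement but imports it from Thurston and from Theorem 6.1.4 of McDuff--Salamon, whose argument is the same two-stage scheme you use (patching the fiberwise form via contractible trivializing charts, local primitives $\beta_i$, and a partition of unity, then perturbing by $C\pi^*\omega_B$). The only difference is cosmetic: the cited proof checks nondegeneracy of $\omega_F+C\pi^*\omega_B$ directly on the splitting of $T_pE$ into the vertical space and its $\omega_F$-orthogonal complement, whereas you verify positivity of the top power $(\omega_F+C\pi^*\omega_B)^n$ via its leading term in $C$; both are valid, and your Stage 1 (including the use of structure-group invariance to make the fiberwise forms $\sigma_i$ match on overlaps, and of the cohomological hypothesis to produce $\alpha$) is exactly where the hypotheses are used, just as in the cited source. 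The only point stated slightly too quickly is that $\alpha$ restricts to the right class on \emph{every} fiber, not just the one in the hypothesis; this follows from local constancy of the restricted class over the (connected) base together with the structure-group invariance of $\omega_0$, and is harmless here since the base $G//H$ in the paper's application is connected.
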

The spectral sequence associated to the fiber bundle  \eqref{eq:fibrationbiquot} collapses since the cohomologies of fiber and base are concentrated in even degrees -- which was shown in \cite{MR1160094} and \cite{KapovitchRHTBiquotients}, see Theorem \ref{cohomologyofbiquot} below. Thus, the fiber inclusion induces a surjection $H^*(G//T)\ra H^*(H/T)$. To apply Theorem \ref{Thurston} to a given $G//T$ we thus only need to find an appropriate group $H$ such that $G//H$ admits a symplectic structure.

\subsection{Examples of symplectic biquotients}\label{examplesection}

\paragraph{Simply connected simple groups.}
For a simply connected, simple, compact Lie group $G$, biquotients $G//T$ with $T$ of maximal rank as above have been classified in \cite{MR758252}, up to a certain notion of equivalence, see \cite[p.\ 75]{MR758252} -- roughly, it allows to flip the left and right factor of $T$ and to modify $T$ by automorphisms of $G$; for us it is only important that equivalent biquotients are diffeomorphic. Apart from the usual action of a maximal torus by left multiplication and up to equivalence, the free double-sided actions of tori of maximal rank are given by the following tori in $G\times G$:

\begin{itemize}
\item $G=\SU(n)$ (\cite[Satz 664]{MR758252}): Let $e_1,\ldots,e_n$ be the standard basis of the Lie algebra $\mathfrak{t}$ of the diagonal maximal torus $T$ in $\U(n)$. Let $e=e_1+\ldots+e_n$ denote the generator of the center of $\un(n)$. For an element $x\in\mathfrak{u}(n)$ we denote by $x'$ the projection of $x$ to $\su(n)$ along the complement generated by $e$, and for a pair of elements $(x;y)$ we write $(x;y)':= (x';y')$. Up to equivalence, the only two families of tori inducing a free double-sided action are given by $\mathfrak{s}_{k1},\mathfrak{s}_{k2}$, $1\leq k\leq \lfloor\frac{n}{2}\rfloor$, where
\begin{align*}
\mathfrak{s}_{k1}&=\left\langle (2e_n;e_1+e_n)',(0;e_a-e_1),(0;e_b-e_n),~ 1\leq a\leq k,k+1\leq b \leq n\right\rangle\\
\mathfrak{s}_{k2}&=\left\langle \left(2\sum_{i=1}^k e_i;e_n-e_1+2\sum_{i=1}^k e_i\right)',(0, e_i-e_1),~2\leq i\leq n-1\right\rangle.
\end{align*}
\item $G=\spin(2n),n\geq 4$, and $G=\spin(2n+1),\SP(n),n\geq 2$ (\cite[Satz 75]{MR758252}): For $G=\SP(n)$ we fix the torus $T\subset \U(n)\subset\SP(n)$ from before.
In the other cases we choose $T$ to be the maximal torus of $\spin(2n)$ (resp. $\spin(2n+1)$) which covers the standard diagonal torus in $\U(n)\subset \SO(2n)\subset \SO(2n+1)$.
In any case let $e_1,\ldots,e_n$ be the standard basis of its Lie algebra. Again we set $e=e_1+\ldots+e_n$. Then, up to equivalence, there are two tori in $G\times G$ inducing a free double-sided action. They have the Lie algebras
\begin{align*}
\mathfrak{s}_1&=\langle (e_n;0),(0;e_1-e_n),\ldots,(0;e_{n-1}-e_n)\rangle\\
\mathfrak{s}_2&=\langle (e;0),(0;e_1),\ldots,(0;e_{n-1})\rangle.
\end{align*}

\item On the exceptional Lie groups there are no genuine double-sided free actions of tori of maximal rank (\cite[S\"atze 82,83,84]{MR758252}).
\end{itemize}

For roughly half of these cases we find symplectic structures on the resulting biquotients by giving an extension of the torus as in the previous section. Our constructions are inspired by \cite{MR758252} where the author does not only classify all freely acting tori of maximal rank but also their maximal freely acting extensions. For $\SU(n)$ this extension will satisfy our requirements whereas for the other groups we take a slightly smaller one. All of the constructions are summarized here for the convenience of the reader.

\begin{thm}\label{extensionexistence}
For $G=\SU(n)$ (resp.\ $G=\spin(2n),\SP(n)$) the tori associated to $\mathfrak{s}_{k2}$ (resp.\ $\mathfrak{s}_2$) admit an equal rank extension $H\subset G\times G$ satisfying the properties assumed for construction \ref{construction} above. In particular, the associated biquotients admit a symplectic structure.
\end{thm}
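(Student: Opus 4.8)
The plan is to treat the three families --- $\SU(n)$ with $\mathfrak s_{k2}$, and $\spin(2n)$ and $\SP(n)$ with $\mathfrak s_2$ --- one at a time, in each case first writing down the extension $H$ explicitly and then checking the four requirements of the construction in Section~\ref{construction}. For the Lie algebra I would take $\mathfrak h=\mathfrak s+\sum_\alpha\mathfrak g_\alpha\subseteq\mathfrak g\oplus\mathfrak g$, where $\mathfrak s$ denotes the given torus Lie algebra and the sum runs over a set of roots of $\mathfrak g\oplus\mathfrak g$ that is closed under $\alpha\mapsto-\alpha$ and whose coroots all lie in $\mathfrak s$; this guarantees that $\mathfrak h$ is a subalgebra with $\mathfrak s$ as a Cartan subalgebra. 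Concretely, for $\SU(n)$ one adjoins to $\mathfrak s_{k2}$ all root spaces of the block subalgebra $\su(n-1)$ in the right-hand factor --- which recovers Eschenburg's maximal freely acting extension --- while for $\spin(2n)$ and $\SP(n)$ one adjoins only the root spaces of a block $\mathfrak u(n-1)$ in the right-hand factor, strictly smaller than the maximal extension (this is the ``slightly smaller'' extension alluded to above). Since $\mathfrak s$ is then a Cartan subalgebra of $\mathfrak h$, the subgroup $H$ is automatically connected with $\rk H=\rk T=\rk G$, so the equal-rank and connectedness requirements need no separate verification.

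Freeness of the $H$-action is also essentially automatic. Every element of the compact connected group $H$ is $H$-conjugate into $T$, and if $h\in H$ is not central in $H$ then neither is any $H$-conjugate of $h$ lying in $T$; as $\Delta Z\cap H$ is central in $H$ and $Z(H)\subseteq T$, it follows that every $h\in H\setminus\Delta Z$ is $H$-conjugate to some $t\in T\setminus\Delta Z$. By freeness of the $T$-action, such a $t$ has no fixed point on $G$, hence neither does $h$. (For $\SU(n)$ this is in any case built in, since $H$ is Eschenburg's freely acting extension, and for $\spin(2n)$ and $\SP(n)$ it follows from the trivial fact that a subgroup of a freely acting group acts freely.)

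The heart of the matter is to show that $G//H$ carries a symplectic form, and the strategy would be to identify $G//H$ with a generalized flag manifold (a coadjoint orbit), which is symplectic. Since the right-hand factor of $H$ contains a block subgroup $K$ acting freely by right translations, one first passes to the homogeneous space $G/K$ --- for $\SU(n)$ this is the sphere $\SU(n)/\SU(n-1)\cong S^{2n-1}$, in the other cases a suitable homogeneous space --- and then realises $G//H$ as the quotient of $G/K$ by the residual group $H/(\{e\}\times K)$. The subtlety to watch is that this residual action need not be effective: already for the Eschenburg flag the residual circle acts on $S^5\subseteq\mathbb C^3$ through a finite kernel, with effective weights $(1,-1,-1)$, so that after effectivisation the quotient is recognised as $\mathbb{CP}^2$. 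In general one reduces to a torus acting linearly on a sphere (or a product of spheres), checks that the effectivised action is free --- which it must be, by the argument of the previous paragraph --- and identifies the quotient as a projective space, a complex quadric, or a product of such. Carrying this identification through, together with the bookkeeping of the weights and of the ineffective kernels, is where I expect the main difficulty to lie. Once $G//H$ is known to be symplectic, the construction of Section~\ref{construction} finishes the argument: the standard flag form $\omega_0$ on $H/T$ is invariant under the (Hamiltonian) $H$-action, and by the collapse of the spectral sequence of \eqref{eq:fibrationbiquot} its class lies in the image of $H^2(G//T)\to H^2(H/T)$, so Theorem~\ref{Thurston} produces a symplectic form on $G//T$.
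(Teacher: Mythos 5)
Your overall strategy is the paper's: extend $T$ to an equal rank subgroup $H$, show $G//H$ is symplectic, and feed the fibration $H/T\to G//T\to G//H$ into Thurston's theorem. For $G=\SU(n)$ your choice of $H$ (the circle $S'$ together with $\{e\}\times\SU(n-1)$) and the identification of $G//H$ with $\mathbb{C}P^{n-1}$ via the sphere $\SU(n)/\SU(n-1)$ and a weight-$(\pm1)$ circle action agree with the paper, and your freeness argument (every element of the connected equal rank group $H$ is $H$-conjugate into its maximal torus $T$, fixed-point sets transfer under conjugation, so freeness of the $T$-action forces freeness of the $H$-action) is correct and in fact cleaner than the paper's treatment of this point for $\spin(2n)$, where connectedness and freeness are deferred to a covering argument.

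The genuine gap is in the $\SP(n)$ and $\spin(2n)$ cases, and it sits exactly where you admit ``the main difficulty'' lies. You propose adjoining only the root spaces of a block $\mathfrak{u}(n-1)$ in the right-hand factor, i.e.\ $H\cong S\times\U(n-1)$, and claim that $G//H$ will then be identified with a projective space, a quadric, or a product of such by reducing to a linear torus action on a sphere. With that choice this is false: for $\SP(n)$ one gets $S\backslash\SP(n)/\U(n-1)$, of dimension $n^2+3n-2$, which fibers over $\mathbb{C}P^{2n-1}$ with fiber the Hermitian symmetric space $\SP(n-1)/\U(n-1)$ (similarly $S\backslash\SO(2n)/\U(n-1)$ fibers over $\mathbb{C}P^{n-1}$ with fiber $\SO(2n-1)/\U(n-1)$); these are genuine biquotients of strictly larger dimension than any projective space in sight, so no reduction to a circle acting on a sphere is available, and proving them symplectic would itself require a further Thurston-type argument (with its own cohomological hypothesis) that you do not carry out. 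The paper avoids this entirely by taking the larger extensions $H=S\times\SP(n-1)$, respectively the preimage in $\spin(2n)\times\spin(2n)$ of $S\times\SO(2n-1)$: then $G//H$ is the quotient of $\SP(n)/\SP(n-1)\cong S^{4n-1}$, respectively $\SO(2n)/\SO(2n-1)\cong S^{2n-1}$, by a Hopf circle, hence $\mathbb{C}P^{2n-1}$ respectively $\mathbb{C}P^{n-1}$ on the nose (the ``slightly smaller'' extension mentioned in the text is smaller than Eschenburg's maximal freely acting extension, not a $\U(n-1)$-reduction of the block subgroup). So either switch to these extensions and do the explicit sphere computation, or supply the missing symplectic-structure argument for the base you actually chose; as written, the central claim that $G//H$ is symplectic is unproven for two of the three families.
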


\begin{proof}
Let us begin with $G=\SU(n)$. Let $S'$ be the circle generated by
\[\left(2\sum_{i=1}^k e_i;e_n-e_1+2\sum_{i=1}^k e_i\right)'\]
and set $U:=\{e\}\times \SU(n-1)$ with the second factor embedded in the upper left corner. Then $H:=S'\cdot U$ contains the desired torus. Note that $S'$ normalizes $U$ so we can see $G//H$ as the quotient of the induced $S'$-action on $G//U=\SU(n)/\SU(n-1)$ defined by
\[(s_l,s_r)\cdot A\cdot\SU(n-1)=s_lAs_r^{-1}\cdot\SU(n-1).\]
We can simplify things if, instead of looking at the $S'$-action, we consider the circle $S\subset \U(n)\times\U(n)$ generated by $\left(2\sum_{i=1}^k e_i;e_n-e_1+2\sum_{i=1}^k e_i\right)$. It acts on $\SU(n)/\SU(n-1)$ with the same orbits as $S'$ because the center does not contribute to the double-sided action. Concretely, the elements of $S$ are pairs $(B_z,C_z)$, $z\in S^1$ where $B_z$ is the diagonal matrix with entries $z^2$ from positions $1$ to $k$ and $1$ on the rest of the diagonal and $C_z$ is diagonal with entries $b_{11},\ldots,b_{nn}$ satisfying $b_{11}=z,b_{22}=\ldots=b_{kk}=z^2,b_{nn}=z$ and $b_{ii}=1$ for the remaining entries. The diffeomorphism $\SU(n)/\SU(n-1)\ra S^{2n-1}$ that sends $A\cdot\SU(n-1)$ to the last column $(a_{1n},\ldots,a_{nn})$ of $A$ carries $(B_z,C_z)\cdot A\cdot\SU(n-1)=B_zAC_z^{-1}\cdot\SU(n-1)$ to $(za_{1n},\ldots,za_{kn},\overline{z}a_{(k+1)n},\ldots,\overline{z}a_{nn})$. If we further compose with the diffeomorphism that conjugates coordinates $k+1$ to $n$ the $S$-action is identified with the standard $S^1$-action on $S^{2n-1}$ which yields $G//H\cong\mathbb{C}P^{n-1}$.\\
For $G=\SP(n)$ set $H=S\times \SP(n-1)$, where $S$ is the circle generated by $e$ and $\SP(n-1)$ is embedded in the upper left corner. As before we can regard $G//H$ as the quotient of the $S$-action on $\SP(n)/\SP(n-1)\cong S^{4n-1}$. This time the latter identification directly carries the $S$-action to the standard $S^1$-action on $S^{4n-1}$ so again $G//H\cong\mathbb{C}P^{2n-1}$.\\
The same phenomenon occurs for $G=\spin(2n)$. Define $\overline{H}=S\times\SO(2n-1)$, where $S\subset \SO(2n)$ is the circle generated by $e$ and $\SO(2n-1)$ is embedded in the upper left corner.
Now set $H=(p^2)^{-1}(\overline{H})$, $p$ being the projection $\spin(2n)\ra\SO(2n)$. The group $H$ contains the torus defined by $\mathfrak{s}_2$ and actually $G//H\cong \SO(2n)//\overline{H}$. The latter is the quotient of the $S$-action on $\SO(2n)/\SO(2n-1)$ which is easily exposed to be $\mathbb{C}P^{n-1}$ as before. The remaining questions are whether the $H$ action is indeed free and whether $H$ is connected. Note that $G//H$ is simply connected and does not admit nontrivial coverings. With this in mind, both questions are answered positively by the subsequent discussion of coverings.
\end{proof}

\paragraph*{Coverings.} We have seen that there exists quite a large number of symplectic biquotients of simply connected simple groups. We wish to extend the discussion to the realm of groups with nontrivial fundamental group.\\
Any Lie group $G$ has a universal covering group $\hat{G}$ such that $G=\hat{G}/\Gamma$ for some subgroup $\Gamma$ of the center of $\hat{G}$. We assume $G$ to be simple, so $\Gamma$ is necessarily discrete. Now if $H\subset G\times G$ acts freely on $G$ and $p:\hat{G}\ra G$ is the projection, the group $\hat{H}=(p^2)^{-1}(H)\subset\hat{G}\times\hat{G}$ acts freely on $\hat{G}$ \cite[Satz 35]{MR758252}. Actually $\hat{G}//\hat{H}=G//H$, where $\hat H$ may be disconnected. If we consider instead the action of the identity component $\hat{H}_0$ we obtain a covering $\hat{G}//\hat{H}_0\ra G//H$ by dividing out the $\Gamma$-action.\\
This implies that any free double sided torus action on $G$ is covered by one of the cases in the classification list above. Note however that the converse statement does not necessarily apply as not every free double sided action on $\hat{G}$ induces such an action on $G$.\\
Now assume that for some biquotient $G//T$ we have constructed a symplectic form $\hat{\omega}$ on $\hat{G}//\hat{T}_0$ via an extension $\hat{H}$ of $\hat{T}_0$ satisfying the conditions of construction \ref{construction}. Assume further that the symplectic form $\omega_B$ on $\hat{G}//\hat{H}$ that was used for the construction is invariant under the action of $\Gamma$.
Then by the discussion in Section \ref{hamiltonianactions} (setting $K=\Gamma\times\{e\}$), we see that $\hat{\omega}$ can be chosen $\Gamma$-invariant. In particular, it induces a symplectic form on $G//T$.\\
To conclude, note that all of the cases covered in Theorem \ref{extensionexistence} admit such a choice of $\omega_B$:
\begin{itemize}
\item For $\hat{G}=\SU(n)$, and $\hat{H}$ as in the proof of \ref{extensionexistence}, the identification $\hat{G}//\hat{H}\cong\mathbb{C}P^{n-1}$ sends $\hat{H}\cdot A$ to the last column of $A$ with certain entries conjugated. In particular, under this identification the center of $\SU(n)$ acts on $\mathbb{C}P^{n-1}$ by multiplying entries with certain elements of $S^1$. Thus choosing a symplectic form $\omega_B$ on $\mathbb{C}P^{n-1}$ that is invariant under the standard $\U(n)$-action fulfills the requirements.
\item The center of $\hat{G}=\SP(n)$ is $\{\pm 1\}$ which acts trivially on $\hat{G}/\hat{H}\cong \mathbb{C}P^{2n-1}$.
\item For $\hat{G}=\spin(2n)$ we observe that the action of the center on $\hat{G}/\hat{H}$ factors through the action of the center of $\SO(2n)$ which is $\{\pm1\}$. Again, this acts trivially on $\hat{G}/\hat{H}$.
\end{itemize}

The above discussion is summarized in the following

\begin{prop}
Let $G//T$ be an equal rank biquotient of a simple Lie group $G$, with $T\subset G\times G$ a torus. Then it is covered by one of the biquotients $\hat{G}//\hat{T}_0$ of simply connected Lie groups $\hat{G}$ from the list above. Moreover, if $\hat{G}//\hat{T}_0$ belongs to the cases covered in Theorem \ref{extensionexistence}, then the symplectic structure on $\hat{G}//\hat{T}_0$ descends to $G//T$.
\end{prop}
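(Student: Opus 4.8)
The plan is to reconstruct the statement from the \textbf{Coverings} discussion above, which already contains all the ingredients, organized into two parts. For the covering assertion I would let $p\colon\hat G\to G$ be the universal covering, so $\Gamma=\ker p$ is a finite central subgroup ($G$ being compact and simple). Setting $\hat H:=(p\times p)^{-1}(T)\subset\hat G\times\hat G$, I invoke \cite[Satz 35]{MR758252} to get that $\hat H$ acts freely on $\hat G$, and recall from the discussion above that $\hat G//\hat H=G//T$. Since $\ker(p\times p)=\Gamma\times\Gamma$ is finite and $T$ is a torus, $\hat H$ has dimension $\rk \hat G$, and its identity component $\hat T_0$ has connected commutator subgroup contained in the finite group $\Gamma\times\Gamma$, hence is a torus of maximal rank. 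The inclusion $\hat T_0\subset\hat H$ then gives a finite covering $\hat G//\hat T_0\to G//T$ by dividing out the residual action, which as noted above is the $\Gamma$-action (with $\Gamma=\Gamma\times\{e\}$ acting by left translation). As an equal rank biquotient of a simply connected compact simple group, $\hat G//\hat T_0$ is, up to Eschenburg's equivalence and hence up to diffeomorphism, either a full flag manifold (maximal torus acting by left multiplication) or one of the biquotients listed in Section \ref{examplesection}; this proves the first claim.

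For the descent, suppose $\hat G//\hat T_0$ is one of the cases covered in Theorem \ref{extensionexistence}, and keep the notation of its proof: $H$ is the equal rank extension of $\hat T_0$ with $\hat G//H$ symplectic via a form $\omega_B$, and $\hat\omega+C\pi^*\omega_B$ is the symplectic form on $\hat G//\hat T_0$ produced by Theorem \ref{Thurston}. Since the $\Gamma$-action on $\hat G//\hat T_0$ is free with quotient $G//T$, it suffices to arrange this form to be $\Gamma$-invariant. The first step is to check case by case that $\omega_B$ can be chosen $\Gamma$-invariant: for $\hat G=\SU(n)$ the identification $\hat G//H\cong\mathbb{C}P^{n-1}$ intertwines the $\Gamma$-action with a subaction of the standard $\U(n)$-action, so any $\U(n)$-invariant K\"ahler form works; for $\hat G=\SP(n)$ the center $\{\pm1\}$ acts trivially on $\hat G//H\cong\mathbb{C}P^{2n-1}$; and for $\hat G=\spin(2n)$ the $\Gamma$-action on $\hat G//H\cong\mathbb{C}P^{n-1}$ factors through the center $\{\pm1\}$ of $\SO(2n)$, which again acts trivially. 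The second step is to apply the equivariant refinement of the Thurston construction from Section \ref{hamiltonianactions} with $K=\Gamma\times\{e\}$: averaging the relevant primitive over the finite group $\Gamma$ makes $\hat\omega$ $\Gamma$-invariant, and the interpolation with $C\pi^*\omega_B$ preserves invariance. The resulting $\Gamma$-invariant symplectic form then descends to $G//T$.

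The first part is soft once \cite[Satz 35]{MR758252} and Eschenburg's classification are cited; the only care needed is to identify the residual deck action precisely and note that it is free. The genuine obstacle is the $\Gamma$-equivariance in the second part, namely the case-by-case verification that $\omega_B$ can be taken $\Gamma$-invariant, which relies on the explicit descriptions of $\hat G//H$ from the proof of Theorem \ref{extensionexistence}, together with having available the $K$-equivariant version of Theorem \ref{Thurston}. Neither point is deep, but both are essential, and the argument is incomplete without the equivariant Thurston statement, which is supplied in Section \ref{hamiltonianactions}.
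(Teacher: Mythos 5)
Your proposal is correct and follows essentially the same route as the paper: the covering assertion via \cite[Satz 35]{MR758252} and passing to the identity component $\hat T_0$ with the residual $\Gamma$-action, and the descent via the case-by-case check that $\omega_B$ on $\hat G//H$ can be chosen $\Gamma$-invariant together with the equivariant version of Thurston's construction from Section \ref{hamiltonianactions} with $K=\Gamma\times\{e\}$. The only cosmetic deviation is that the invariance is achieved by averaging the closed form $\omega_F$ itself (not a primitive), but this does not affect the argument.
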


\section{Hamiltonian actions on biquotients}\label{hamiltonianactions}

\subsection{Invariant symplectic forms}
In our examples we find additional symmetry on $G//T$: Take a closed subgroup $K\subset Z_{G\times G}(H)$. Then there is an induced action of $K$ on $G//T$ and $G//H$ defined by $(k_1,k_2)\cdot T\cdot g=T\cdot k_1gk_2^{-1}$ and $(k_1,k_2)\cdot H\cdot g=H\cdot k_1gk_2^{-1}$. If the $K$-action on $G//H$ is symplectic we can adapt the construction of the symplectic form $\omega$ on $G//T$ such that the $K$-action on $G//T$ is symplectic as well:\\
Suppose we have constructed $\omega=\omega_F+C\pi^*(\omega_B)$ as in Theorem \ref{Thurston} above. The $K$-action on $G//T$ commutes with the projection onto $G//H$ and thus respects fibers. The diagram

\[\xymatrix{
& H/T\ar[dl]\ar[dr] & \\
[g,H/T]\ar[rr]\ar[d]& & [k\cdot g,H/T] \ar[d]\\
G//T\ar[rr]^{\cdot k}& & G//T
}\]commutes, where $H/T\ra [g,H/T],~ hT\mapsto [g,hT]$ identifies the fiber over $H\cdot g$ with $H/T$ (analogous for the fiber over $H\cdot kg$). Since $\omega_F$ pulls back to $\omega_0$ in $H/T$ we see that $k^*\omega_F$ also restricts to the symplectic form on every fiber.
Therefore, if we replace $\omega_F$ by
\[\tilde{\omega}_F=\int_K k^*\omega_F dk\]
it will still restrict to the symplectic forms on the fibers. Potentially replacing $C$ by a bigger constant $\tilde{\omega}_F+C\pi^*(\omega_B)$ is a $K$-invariant symplectic form on $G//T$. Thus $K$ acts in a Hamiltonian fashion by the following
\begin{lem}[Addendum to Theorem 26.1 in \cite{MR770935}]
Let $K$ be a Lie group acting symplectically on a symplectic manifold $(M,\omega)$ with $H^1(M)=0$. Then the action is Hamiltonian.
\end{lem}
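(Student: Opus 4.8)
The plan is the standard obstruction-theoretic argument: a symplectic action is Hamiltonian once $H^1(M)$ vanishes, proved via Cartan's formula. Write $\mathfrak{k}$ for the Lie algebra of $K$, and for $\xi\in\mathfrak{k}$ let $X_\xi$ denote the fundamental vector field of the action on $M$.

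First I would note that each $X_\xi$ is a symplectic vector field: since every $k\in K$ acts by a symplectomorphism, the flow of $X_\xi$ preserves $\omega$, so $\mathcal{L}_{X_\xi}\omega=0$. Combined with $d\omega=0$ and Cartan's formula $\mathcal{L}_{X_\xi}=d\iota_{X_\xi}+\iota_{X_\xi}d$, this gives $d(\iota_{X_\xi}\omega)=0$, i.e.\ $\iota_{X_\xi}\omega$ is closed. Here the hypothesis $H^1(M)=0$ enters: the form $\iota_{X_\xi}\omega$ is exact, so there is $\mu^\xi\in C^\infty(M)$ with $d\mu^\xi=\iota_{X_\xi}\omega$, unique up to an additive constant. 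Fixing a basis of $\mathfrak{k}$ together with one primitive for each basis element and extending linearly, we obtain a linear map $\mathfrak{k}\to C^\infty(M)$, $\xi\mapsto\mu^\xi$, equivalently a smooth map $\mu\colon M\to\mathfrak{k}^*$ with $\langle\mu,\xi\rangle=\mu^\xi$. By construction $d\langle\mu,\xi\rangle=\iota_{X_\xi}\omega$ for every $\xi\in\mathfrak{k}$, so $\mu$ is a moment map and the action is Hamiltonian.

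If one additionally requires the moment map to be $K$-equivariant, as in some conventions, one further point must be checked, and this is the only step beyond formal calculus. A short computation with Cartan's formula shows that $\{\mu^\xi,\mu^\eta\}-\mu^{[\xi,\eta]}$ is locally constant, hence (for $M$ connected) a constant $c(\xi,\eta)$; the bilinear map $c$ is a Chevalley--Eilenberg $2$-cocycle on $\mathfrak{k}$, its class in $H^2(\mathfrak{k};\mathbb{R})$ is the obstruction to equivariance, and replacing $\mu$ by $\mu+\lambda$ with $\lambda\in\mathfrak{k}^*$ changes $c$ by a coboundary. In the setting of this article $M=G//T$ is compact and connected and $K$ is a closed, hence compact, subgroup of $G\times G$; then averaging $\mu$ over $K$ with respect to the coadjoint representation produces a genuinely equivariant moment map. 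So the real content of the lemma is the Cartan-formula argument of the preceding paragraph, and the only subtlety --- relevant only when equivariance is demanded --- is this cocycle/averaging step, which is where the compactness available in our applications is used.
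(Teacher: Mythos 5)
Your argument is correct and is exactly the standard obstruction-theoretic proof that the paper itself does not spell out but delegates to the cited Addendum to Theorem 26.1 of Guillemin--Sternberg: Cartan's formula plus $H^1(M)=0$ gives primitives $\mu^\xi$ of $\iota_{X_\xi}\omega$, chosen linearly in $\xi$, which is all the lemma asserts. Your extra paragraph on the Chevalley--Eilenberg cocycle and averaging over the compact group $K$ to obtain an equivariant moment map is a correct and relevant supplement for the paper's applications, but it goes beyond what the quoted statement requires.
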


Let us have a look which Hamiltonian actions arise on the examples of symplectic biquotients from the previous section. We denote by $S_{ki}$ (resp.\ $S_i$) the torus associated to $\mathfrak{s}_{ki}$ (resp.\ $\mathfrak{s}_i$). For $T$ equal to one of these tori and $H$ the corresponding extensions from the proof of \ref{extensionexistence} we obtain Hamiltonian actions of the following groups on $G//T$:

\begin{itemize}
\item $G=\SU(n)$, $T=S_{k2}$: The centralizer $Z_{G\times G}(H)$ is given by $S(\U(k)\times \U(n-k))\times S$ where $S\subset \SU(n)$ is the circle generated by $e_n'$. Note that the action of the whole centralizer on $G//T$ is not effective. An effective action with the same orbits is obtained by restricting only to the left-hand factor that is $S(\U(k)\times\U(n-k))\times\{1\}$.
\item $G=\SP(n)$, $T=S_2$: We have $Z_{G\times G}(H)=\U(n)\times \SP(1)$ with the second factor in the lower right corner. Again the action of the whole group is not effective for the center of the left factor $\U(n)$ acts trivially.
\item $G=\spin (2n)$, $T=S_2$: Recall from the definition of $H$ that $G//H$ is actually equal to $\SO(2n)//\overline{H}$ and that the action of $Z_{G\times G}(H)$ arises as a pullback of the $Z_{\SO(2n)\times\SO(2n)}(\overline{H})$-action along $Z_{G\times G}(H)\ra Z_{\SO(2n)\times\SO(2n)}(\overline{H})$. Consequently, our actual interest lies in the latter which equals $\U(n)\times\{\pm 1\}$. Note that the right-hand factor as well as the center of $\U(n)$ act trivially on $\SO(2n)//\overline{H}$.
\end{itemize}

\subsection{Isotropy representations}

In the above setting we can determine the isotropy representations of $K$ in terms of base and fiber. This is useful for computing the image of the moment map of the $K$-action and will be applied in Section \ref{EBFMomentmap} below.\\
Let $T\cdot g$ be a fixed point of the $K$-action on $G//T$. Then $H\cdot g$ is a fixed point of the $K$-action on $G//H$ so $K$ acts on the fiber over $H\cdot g$. At this point it is convenient to work with the effective versions of the action so let $\overline{T}=T/T\cap\Delta G$. Then the $\overline{T}$-action on $G$ is really free and for any $k\in K$ there is a unique $\varphi(k)\in \overline{T}$ such that $k\cdot g=\varphi(k)^{-1}\cdot g$. We identify the fiber $F=[g,H/T]$ with $H/T$. Note that there is a well defined left action of $\overline{T}$ on $H/T$ for the central part acts trivially. We have
\[k\cdot[g, h\cdot T]=[k\cdot g,h\cdot T]=[g,\varphi(k)\cdot h\cdot T].\]
Thus $K$ acts by pulling back the $\overline{T}$-action on $H/T$ along the homomorphism $\varphi:K\ra \overline {T}$.\\
Observe that the decomposition $T_{T\cdot g}G//T=T_{T\cdot g}F\oplus V$ is preserved by the isotropy action of $K$, where $V$ is the symplectic complement of $T_{T\cdot g}F$. The latter is $K$-equivariantly isomorphic to $T_{H\cdot g}G//H$ so the isotropy representation at $p$ is isomorphic to the sum of the isotropy representation of $K$ at $gH$ and the pullback along $\varphi$ of the isotropy representation of the $\overline{T}$-action on $H/T$ at $eT$.\\
To define the weights of the representations one uses the symplectic form $\omega$ on $G//T$. Since $\omega$ restricts to $\omega_0$ when identifying $F$ with $H/T$ one can determine the weights coming from $T_{T\cdot g}F$ by computing the weights of the isotropy representation of $T$ on $T_{eT}H/T$ with respect to the orientation given by $\omega_0$ and pulling back along the map $\mathfrak{t}^*\ra\mathfrak{k}^*$ defined by $\varphi$. The weights coming from $V$ agree with the ones of the $K$-action on $T_{H\cdot g}G//H$ using $\omega_B$ for orientation. Note that $\omega|_V$ is not necessarily identified with $\omega_B$ on $T_{H\cdot g}G//H$. However the two are sufficiently close if we choose the constant $C$ big enough.

\subsection{The moment map on the Eschenburg flag}\label{EBFMomentmap}

By the Eschenburg flag we mean the quotient of $\SU(3)$ by the double-sided action associated to $\mathfrak{s}_{12}$ (cf.\ Section \ref{examplesection}).
Consider $G=\SU(3)$ and $T\subset \U(3)\times\U(3)$ the torus with Lie algebra \[\mathfrak{t}=\langle (2,0,0;1,0,1),(0,0,0;1,-1,0)\rangle\] with respect to the standard basis of the standard maximal torus in $\U(3)$. Note that $T$ acts on $SU(3)$ since elements in $T$ have the same determinant in both components. This action has the same orbits as the action associated to $\mathfrak{s}_{12}$. Take $H\subset \U(3)\times\U(3)$ to be the subgroup with Lie algebra $\mathfrak{h}=\langle(2,0,0;1,0,1),(0;\su(2)\oplus 0)\rangle$. We have induced actions on $G//T$ and $G//H$ by left multiplication of $K=S\left(\U(1)\times \U(2)\right)$. We want to determine the image of the moment map associated to the action of the two dimensional diagonal torus $S$ of $K$ by computing the weights of the isotropy representations.\\
We fix the basis $e_1-e_2,e_1-e_3$ of $\mathfrak{s}$ (where the $e_i$ are the standard basis of the Lie algebra of the diagonal torus in $\U(3)$) and also use the corresponding dual basis for $\mathfrak{s}^*$. We identify $G//H$ with $\mathbb{C}P^2$ as in the proof of Theorem \ref{extensionexistence}.
Explicitly, the map from $G//H$ to $\mathbb{C}P^2$ is given by sending $H\cdot g$ to $[g_{13}:\overline{g_{23}}:\overline{g_{33}}]$, where $g_{ij}$ denotes the respective matrix entries of $g$.\\
Using the standard symplectic form on $\mathbb{C}P^2$ the weights of the isotropy representations of the standard (non-effective) $T^3$-action on $\mathbb{C}P^2$ at the fixpoints are
\begin{itemize}
\item $(-1,1,0),~(-1,0,1)$ at $[1:0:0]$
\item $(1,-1,0),~(0,-1,1)$ at $[0:1:0]$
\item $(1,0,-1),~(0,1,-1)$ at $[0:0:1].$
\end{itemize}
The $S$-action on $G//H\cong\mathbb{C}P^2$ can be understood as the pullback of this standard action along the homomorphism $S\ra T^3$ which on the level of Lie algebras is represented by the matrix
\[\begin{pmatrix}
1&1\\1&0\\0&1
\end{pmatrix}\]
using the standard basis for the Lie algebra of $T^3$.
We compute
\[\begin{pmatrix}
1&1&0\\1&0&1
\end{pmatrix}\cdot\begin{pmatrix}
-1&-1&1&0&1&0\\1&0&-1&-1&0&1\\0&1&0&1&-1&-1
\end{pmatrix}=\begin{pmatrix}
0&-1&0&-1&1&1\\-1&0&1&1&0&-1
\end{pmatrix}\]

Thus the weights of the horizontal part of the isotropy representations of the $S$-action on $G//T$ at the fixpoints within the respective fibers are given by
\begin{itemize}
\item $(0,-1),~(-1,0)$ over $[1:0:0]$
\item $(0,1),~(-1,1)$ over $[0:1:0]$
\item $(1,0),~(1,-1)$ over $[0:0:1]$.
\end{itemize}
It remains to find the actual fixed points of the $S$-action on the fixed fibers and compute the weights of the corresponding vertical representations. We fix $(2,0,0;1,0,1),(2,0,0;0,1,1)$ as a basis for $\mathfrak{t}$ and also use the dual basis for $\mathfrak{t}^*$.
The group $H$ consists of all elements of the form
\[\left(\begin{pmatrix}
\det(A)^2& &\\ & 1 &\\ & & 1
\end{pmatrix},\begin{pmatrix}
A & \\
 & \det(A)
\end{pmatrix}\right)\]
with $A\in \U(2)$.
Mapping an element of $H$ displayed as above to $A$ defines an isomorphism $H\cong \U(2)$ that sends the chosen basis of $\mathfrak{t}$ to the standard basis of the Lie algebra of the maximal torus of $U(2)$. We identify $H/T\cong \U(2)/T^2\cong \mathbb{C}P^1$ by projecting onto the second column. The $T$-action from the left on $H/T$ corresponds to the standard (non-effective) $T^2$-action on $\mathbb{C}P^1$. Using the standard symplectic form on $\mathbb{C}P^1$, the $T$-action on $H/T$ has the weights $(-1,1)$ and $(1,-1)$ at the two fixed points $[1:0]$ and $[0:1]$.\\
We want to understand the action on the three fixed fibers. The fixed points of the $S$-action on $G//T$ are represented by matrices $p_1,\ldots, p_6$ which are, in this order, given by

\[\begin{pmatrix}
 1 & &  \\
 & 1& \\
 & & 1\end{pmatrix},\quad
\begin{pmatrix}
 & 1 & \\ & & 1\\ 1& &
\end{pmatrix},\quad
\begin{pmatrix}
 & & 1 \\ 1& & \\ & 1 &
\end{pmatrix},\quad
\begin{pmatrix}
 1& & \\ & & 1\\ & -1&
\end{pmatrix},\quad
\begin{pmatrix}
 & 1&  \\ 1& & \\ & & -1
\end{pmatrix},\quad
\begin{pmatrix}
 & &1 \\ & 1&\\ -1& &
\end{pmatrix},
\]
out of which the pairs $(p_1,p_5)$, $(p_2,p_4)$, and $(p_3,p_6)$ lie in the same fiber over $G//H$. In what follows, we use $p_1$, $p_2$, and $p_3$ for the identification of the respective fibers $[p_i,H/T]\cong H/T$.
We compute
\begin{align*}
\begin{pmatrix}
s& &\\ & \bar{s} &\\ & & 1
\end{pmatrix} p_1&=\begin{pmatrix}
1 & &\\ & 1 &\\ & & 1
\end{pmatrix}p_1\begin{pmatrix}
{s}& &\\ & \bar{s} &\\ & & 1
\end{pmatrix}
\text{ and }
\begin{pmatrix}
s& &\\ & 1 &\\ & & \bar{s}
\end{pmatrix} p_1&=\begin{pmatrix}
s^2 & &\\ & 1 &\\ & & 1
\end{pmatrix}p_1\begin{pmatrix}
\bar{s}& &\\ & 1 &\\ & & \bar{s}
\end{pmatrix}.
\end{align*}
for $s\in S^1$. We deduce that $S$ acts on $[p_1,H/T]\cong H/T$ by pulling back the $T$-action along the homomorphism $\varphi_1:S\ra T$ whose matrix representation on Lie algebras is given by
\[\begin{pmatrix}
1&-1\\-1&0
\end{pmatrix}.\]
Thus in the fiber over $[0:0:1]$ there are 2 fixed points and the weights of the respective isotropy representations are given by the pullbacks
\[\pm\begin{pmatrix}
1&-1\\-1&0
\end{pmatrix}\begin{pmatrix}
1\\-1
\end{pmatrix}=\pm\begin{pmatrix}
2\\-1
\end{pmatrix},\]
where the positive sign is the weight at $p_1$ and the negative sign corresponds to $p_5$.
Analogously we compute
\begin{align*}
\begin{pmatrix}
s& &\\ & \bar{s} &\\ & & 1
\end{pmatrix} p_2&=\begin{pmatrix}
s^2 & &\\ & 1 &\\ & & 1
\end{pmatrix}p_2\begin{pmatrix}
1& &\\ & \bar{s} &\\ & & \bar{s}
\end{pmatrix}
\text{ and }
\begin{pmatrix}
s& &\\ & 1 &\\ & & \bar{s}
\end{pmatrix} p_2&=\begin{pmatrix}
1 & &\\ & 1 &\\ & & 1
\end{pmatrix}p_2\begin{pmatrix}
\bar{s}& &\\ & {s} &\\ & & 1
\end{pmatrix}
\end{align*}
as well as

\begin{align*}
\begin{pmatrix}
s& &\\ & \bar{s} &\\ & & 1
\end{pmatrix} p_3&=\begin{pmatrix}
s^2 & &\\ & 1 &\\ & & 1
\end{pmatrix}p_3\begin{pmatrix}
\bar{s}& &\\ & 1 &\\ & & \bar{s}
\end{pmatrix}\text{ and }
\begin{pmatrix}
s& &\\ & 1 &\\ & & \bar{s}
\end{pmatrix} p_3&=\begin{pmatrix}
s^2 & &\\ & 1 &\\ & & 1
\end{pmatrix}p_3\begin{pmatrix}
{1}& &\\ & \bar{s} &\\ & & \bar{s}
\end{pmatrix}.
\end{align*}
Thus $S$ acts on the respective fibers by pullback along $\varphi_2$ and $\varphi_3$, represented by
\[\begin{pmatrix}
0&-1\\-1&1
\end{pmatrix}\text{ and }\begin{pmatrix}
-1&0\\0&-1
\end{pmatrix}\]
giving rise to the weights
\[\pm\begin{pmatrix}
0&-1\\-1&1
\end{pmatrix}\begin{pmatrix}
1\\-1
\end{pmatrix}=\pm\begin{pmatrix}
1\\-2
\end{pmatrix}
\text{ and }
\pm\begin{pmatrix}
-1&0\\0&-1
\end{pmatrix}\begin{pmatrix}
1\\-1
\end{pmatrix}=\pm\begin{pmatrix}
-1\\1
\end{pmatrix}
,\]
with the positive signs corresponding to $p_2$ and $p_3$ while the negative sign belongs to the weights at $p_4$ and $p_6$.
In total the weights at the six fixed points are given by
\begin{itemize}
\item $(1,0),~(1,-1),~(2,-1)$ at $p_1$
\item $(1,0),~(1,-1),~(-2,1)$ at $p_5$
\item $(0,1),~(-1,1),~(1,-2)$ at $p_2$
\item $(0,1),~(-1,1),~(-1,2)$ at $p_4$
\item $(0,-1),~(-1,0),~(-1,1)$ at $p_3$
\item $(0,-1),~(-1,0),~(1,-1)$ at $p_6$.
\end{itemize}

One checks that for two of the triples above the generated cone is $\mathbb{R}^2$, hence two fixed points get mapped to the interior of the moment image while the rest maps to the vertices. Up to translation, global rescaling, and rescaling of the parallel edges between $(p_3,p_6)$, $(p_2,p_5)$, and $(p_1,p_4)$, the image of the moment map of the $S$-action on $G//T$ has the shape as pictured in the left hand figure below. The dots correspond to the images of the fixed points whereas the (dashed) lines are the image of the the 1-skeleton of the action. Compare this to the right hand figure which shows the image of the moment map of the $S$-action on $G//H$ (up to translation and global rescaling).
\begin{center}
\begin{tikzpicture}
\draw[step=1, dotted, gray] (-3.5,-4.5) grid (10.5,2.5);

\draw[very thick] (1,1) -- ++(-3,0) -- ++(4,-4) -- ++(0,3) -- ++(-1,1);
\draw[very thick, dashed] (2,-3)--++(-1,+2)--++(0,2)++(0,-2)--++(-1,1)--++(2,0)++(-2,0)--++(-2,1);
  \node at (1,1)[circle,fill,inner sep=2pt]{};
  \node at (1.3,1.3){$p_6$};

  \node at (-2,1)[circle,fill,inner sep=2pt]{};
  \node at (-2.3,1.3){$p_1$};

  \node at (1,-1)[circle,fill,inner sep=2pt]{};
\node at (1.3,-0.7){$p_2$};

  \node at (0,0)[circle,fill,inner sep=2pt]{};
  \node at (0.3,0.3){$p_5$};

  \node at (2,0)[circle,fill,inner sep=2pt]{};
  \node at (2.3,0.3){$p_3$};

  \node at (2,-3)[circle,fill,inner sep=2pt]{};
  \node at (2.3,-3.3){$p_4$};

\draw[very thick] (9,1) -- ++(-3,0) -- ++(3,-3) -- ++(0,3);
\node at (9,1)[circle,fill,inner sep=2pt]{};
\node at (6,1)[circle,fill,inner sep=2pt]{};
\node at (9,-2)[circle,fill,inner sep=2pt]{};

\end{tikzpicture}
\end{center}

Note that at this point we can not expect to obtain the ratios of the lengths of the edges of the left hand figure because these depend on the choice of symplectic form in the following manner: The symplectic form on $G//T$ which we used above is of the form $\omega=\omega_F+C\pi^*(\omega_B)$ for some big enough $C>0$, where $\omega_F$ is closed and $\omega_B$ is the chosen symplectic form on $G//H$. Now if we rescale $\omega$ by considering $C^{-1}\omega_F+\pi^*\omega_B$ we see that for large $C$ the lengths of the edges that are the images of the fixed fibers ($(p_1,p_5)$, $(p_2,p_4)$, and $(p_3,p_6)$) become short and the image of the moment map of $G//T$ approaches that of $G//H$.\\
We point out that the positioning of the inner fixed point images gives an upper bound for the length of the edges coming from the fixed fibers: elongating them would eventually force $p_2$ and $p_5$ to move past each other, which is impossible.

\begin{rem}
Comparing the above picture to the moment image of the $S$-action on the standard flag $\SU(3)/S$, a hexagonal region (cf.\ \cite{MR1414677}), we observe that the latter has much more symmetry. This is due to the fact that the action on $\SU(3)/S$ extends to a Hamiltonian $\SU(3)$-action, which means the Weyl group of $\SU(3)$ acts on the moment image. For the Eschenburg flag however, the $S$-action only extends to an $S(S^1\times \U(2))$-action. The latter has the rather small Weyl group $\mathbb{Z}_2$ which acts in the above picture by reflection at a suitable line in direction $(1,1)$.
\end{rem}

We observe that the momentum image of the $S$-action on the Eschenburg flag is of the same shape as that of Tolman's example \cite{Tolman} of a six-dimensional symplectic $T^2$-manifold  with finitely many fixed points that does not admit an invariant K\"ahler structure: it is a Tolman trapeziod, as it was called in \cite[Section 5.2]{GuilleminSjamaar}. As Tolman's argument for the non-existence of an invariant K\"ahler structure only involves the momentum image, it applies to our situation as well. On the other hand, the Eschenburg flag admits a K\"ahler structure, see \cite{MR3205800} and Section \ref{newexample} below. Note also that Woodward \cite{Woodward} constructed an example of this type admitting a multiplicity-free ${\mathrm{U}}(2)$-action using symplectic surgery, and that our example admits a ${\mathrm{U}}(2)$-action as well. To summarize, our construction shows:
\begin{thm}\label{thm:tolmanwoodward}
There exists a six-dimensional compact, simply-connected manifold $M$ with a ${\mathrm{U}}(2)$-action, such that the restriction to a maximal torus $T^2$ has exactly six fixed points, and which satisfies the following properties:
\begin{enumerate}
\item $M$ admits a K\"ahler structure.
\item There is a symplectic structure on $M$ such that the ${\mathrm{U}}(2)$-action on $M$ is multiplicity-free.
\item $M$ does not admit a $T^2$-invariant K\"ahler structure.
\end{enumerate}
\end{thm}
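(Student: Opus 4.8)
The plan is to take $M$ to be the Eschenburg flag $\SU(3)//S_{12}$ and to assemble the three properties from the constructions carried out above. The manifold $M$ is compact of dimension $8-2=6$, and since $\SU(3)$ is simply connected while the torus acting on it is connected, the homotopy exact sequence of the principal torus bundle $\SU(3)\to M$ gives $\pi_1(M)=0$. By Theorem \ref{extensionexistence}, using the equal rank extension $H$ from its proof -- for which the base $\SU(3)//H$ is $\mathbb{C}P^2$ and the fiber $H/T$ is $\mathbb{C}P^1$ -- the biquotient carries a symplectic form $\omega=\tilde{\omega}_F+C\pi^*\omega_B$, and by Section \ref{hamiltonianactions} it may be chosen invariant under the Hamiltonian left action of $K=S(\U(1)\times\U(2))\cong\U(2)$. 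The restriction of this action to the maximal torus $S\cong T^2$ of $K$ is exactly the one analysed in Section \ref{EBFMomentmap}: it has the six isolated fixed points $p_1,\dots,p_6$, and its moment image is a Tolman trapezoid. This takes care of the structural assertions of the statement.

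For item (1) I would cite the Kähler structure on the Eschenburg flag due to Eschenburg \cite[Theorem 2]{Eschenburg} and Escher--Ziller \cite{MR3205800}, recalled in Section \ref{newexample}. Note that this Kähler form is unrelated to the symplectic form $\omega$ appearing in (2), and need not -- and, by item (3), does not -- be invariant under the Hamiltonian $T^2$-action of Section \ref{EBFMomentmap}, so items (1) and (3) are consistent.

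For item (2) the point is that the Hamiltonian $\U(2)$-action on $(M,\omega)$ is multiplicity-free. Since $\dim M=6=\dim\U(2)+\rk\U(2)$, this is equivalent to the symplectic reductions of $M$ at generic coadjoint orbits being zero dimensional, equivalently to the generic $\U(2)$-orbits being coisotropic. I would verify it by checking, from the explicit description of $K$ and of $S_{12}$ inside $\SU(3)\times\SU(3)$, that the $\U(2)$-action on $M$ is locally free on a dense open set: then for a regular value $\alpha$ of the moment map $\Phi\colon M\to\un(2)^*$ the level $\Phi^{-1}(\alpha)$ is a surface on which the stabilizer $K_\alpha\cong T^2$ acts locally freely, so $\Phi^{-1}(\alpha)/K_\alpha$ is a finite set of points. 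An alternative would be to match the $\U(2)$-moment polytope with that of Woodward's example \cite{Woodward} and appeal to the classification of multiplicity-free compact $\U(2)$-manifolds. For item (3): the $T^2$-moment image computed in Section \ref{EBFMomentmap} is a Tolman trapezoid, and Tolman's obstruction \cite{Tolman} to the existence of an invariant Kähler structure uses only the combinatorics of the moment image together with the isotropy weights at the fixed points, all of which are available here; see also \cite[Section 5.2]{GuilleminSjamaar}.

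The routine parts are the bookkeeping already done in Section \ref{EBFMomentmap}, the citation of the Kähler structure, and the transcription of Tolman's argument. The real work is in item (2): to turn ``multiplicity-free'' into something checkable one must control the stabilizers of the $\U(2)$-action on the biquotient -- in particular the overlap between the left $\U(2)$-factor and the left component of $S_{12}$, which a priori could produce a positive-dimensional generic stabilizer -- in order to conclude generic local freeness. This is the step I expect to be the main obstacle.
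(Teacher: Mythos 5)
Your skeleton coincides with the paper's. You take $M$ to be the Eschenburg flag $\SU(3)//S_{12}$ with the Thurston form of Theorem \ref{extensionexistence}, averaged so as to be invariant under $K=S(\U(1)\times\U(2))\cong\U(2)$ as in Section \ref{hamiltonianactions}; compactness, simple connectivity, the six $T^2$-fixed points with their weights, and the Tolman trapezoid are exactly the content of Section \ref{EBFMomentmap}; item (1) is the cited K\"ahler structure of Eschenburg and Escher--Ziller (Section \ref{newexample}); and item (3) is, as in the paper, Tolman's obstruction \cite{Tolman}, which uses only the momentum image. Up to this point you reproduce the paper's argument.

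The genuine gap is item (2). You correctly isolate multiplicity-freeness of the $\U(2)$-action as the substantive claim and reduce it to generic local freeness, but you then leave that verification open (``the step I expect to be the main obstacle''), so as written one of the three assertions of the theorem is not established. Two remarks on closing it. First, your reduction needs one more ingredient: at a regular value $\alpha$ the reduced space $\Phi^{-1}(\alpha)/K_\alpha$ is a priori only a finite set; to conclude it is a point (equivalently, that generic orbits are coisotropic, or that invariant functions Poisson-commute) either invoke connectedness of moment-map fibers, or argue directly that at a regular value $\ker d\Phi_x=T_x(K\cdot x)^{\omega}$ is two-dimensional and is filled out by the locally free $K_\alpha\cong T^2$-orbit, so that $T_x(K\cdot x)^{\omega}\subset T_x(K\cdot x)$. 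Second, the missing check is finite and sits inside the structure the paper already sets up: in the description $G//T\cong G\times_H H/T$ over $G//H\cong\mathbb{C}P^2$, the $K$-action on the base is (a twist of) the standard cohomogeneity-one $S(\U(1)\times\U(2))$-action on $\mathbb{C}P^2$, whose generic stabilizer is a circle, and what remains is to verify with the explicit matrices -- in the same way Section \ref{EBFMomentmap} treats the fixed fibers -- that this residual circle acts nontrivially on the $\mathbb{C}P^1$-fibers over generic base points; this yields finite generic stabilizers on $G//T$, hence regular values of $\Phi$, and then coisotropy as above. This is precisely the control of stabilizers (the overlap of the left $\U(2)$-factor with the left component of the biquotient torus) that you flag but do not supply; your alternative suggestion, matching the momentum data against the classification of multiplicity-free $\U(2)$-manifolds as in Woodward's example \cite{Woodward}, would also need to be carried out rather than cited. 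Note that the paper itself is terse here, recording (2) as part of the summary of its construction in parallel to Woodward's example, but a self-contained proof must include some such verification, and your write-up does not add it.
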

In \cite{1903.11684} we compare the symplectic Eschenburg flag more closely to Tolman's and Woodward's examples. There, we show all these examples are (non-equivariantly) diffeomorphic -- in particular, Tolman's and Woodward's examples also admit a (noninvariant) K\"ahler structure.

\section{Kähler structures on Biquotients}
\label{kaehler}
\subsection{Flag bundles}
In this section we will remind the reader of some basic
facts concerning flag bundles. More precisely we will explain how one associates a flag bundle to
a complex vector bundle and moreover we will note that if the base manifold is Kähler and if the vector
bundle has a holomorphic structure, then the total space of a flag bundle will be Kähler too.

First, recall the following definition. The ordered set of subspaces $(V_1,\ldots,V_n)$ of a complex
$n$-dimensional vector space $V$ is called a \emph{flag} if $V_1\subset V_2 \subset \ldots V_n=V$,
where $\dim V_i =i$. The set $F(V)$ of all flags is a smooth manifold and can be identified with
$U(n)/T^n$, where $T^n$ is the diagonal $n$-dimensional torus in $U(n)$. More precisely, if
$V = \mathbb C^n$ then
\[
  \Phi \colon \U(n)/T^n \longrightarrow F(\mathbb C^n),\quad
  A \cdot T\mapsto \{ \langle a_1 \rangle, \langle a_1,a_2\rangle ,\ldots,\langle a_1,\ldots,a_{n}\rangle\},
\]
is a bijection where $a_i$ ($1\leq i\leq n$) are the columns of $A$ and $\langle \ldots  \rangle$
means the span over the complex numbers. We say that $F(V)$ is a \emph{(complex) full flag
manifold}.

Second, we call a locally trivial fiber bundle $E \to M$ a \emph{flag bundle} over a manifold $M$ if the
fibers are complex full flag manifolds.

Now if $W \to M$ is a complex vector bundle of rank $n$, then there is a splitting manifold $F(W)$
and a map $\pi \colon F(W) \to M$ such that $\pi^*(W)$ is isomorphic to a Whitney-sum of complex
line bundles over $F(W)$ (cf. \cite[§21]{MR658304}). Moreover $\pi \colon F(W) \to M$ has
the structure of a flag bundle. We will recall briefly this construction, since it will be needed
for this section.

Start with the projectivization $\pi_0 \colon P(W) \to M$ of $W \to M$ which is a locally trivial
bundle with fibers complex projective spaces. There is a line subbundle $S_1$ of $\pi_0^\ast(W)$
such that $S_1$ restricted to the fibers of $\pi_0$ is the tautological bundle. Denote by $Q_1$ the
quotient bundle $\pi_0^\ast(W)/S_1$ and repeat the procedure with $Q_1 \to P(W)=:P_1$ to produce
$\pi_1 \colon P(Q_1) \to P(W)$ and a vector bundle $Q_2 \to P(Q_1)=P_2$. The splitting manifold is
defined to be $F(W):=P_{n-1}$ and the map $\pi = \pi_0 \circ \ldots\circ \pi_{n-1} \colon F(W) \to M$
has the structure of a locally trivial fiber bundle.

Note
thereby, that if $M$ is a point, then $W$ is just a complex vector space and the space $F(W)$ is the
complex full flag manifold defined above. This shows that for a general vector bundle $W \to M$ the map $\pi \colon
F(W) \to M$ has the structure of a flag bundle.

Assume that $M$ is a compact Kähler manifold and $W \to M$ a holomorphic vector bundle. Then we have
a well-known lemma

\begin{lem}[{\cite[Proposition 3.18]{MR2451566}}]\label{ProjectivizationIsKaehler}
  The total space of the projectivization $P(W)$ is Kähler and the projection $\pi_0 \colon P(W)
  \to M$ is holomorphic.
\end{lem}

Thus $\pi_0^\ast(W)$ is again a holomorphic
bundle over $P(W)$. Note that $S_1$ is a holomorphic subbundle of $\pi_0^\ast(W)$, therefore
$Q_1=\pi_0^\ast(W)/S_1$ is again holomorphic. With Lemma \ref{ProjectivizationIsKaehler} we obtain
that $P(Q_1)$ is Kähler and the map $\pi_1 \colon P(Q_1) \to P(W)$ is holomorphic.
Inductively
we finally deduce
\begin{lem}\label{lem:flagkaehler}
  Let $M$ be a compact Kähler manifold and $W \to M$ a holomorphic vector bundle. Then the total space $F(W)$ of the associated flag bundle to $W\to M$ is also Kähler.
\end{lem}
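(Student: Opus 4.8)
The plan is to run an induction along the tower of projectivizations
$P_0 := M,\ P_1 = P(W),\ P_2 = P(Q_1),\ \ldots,\ P_{n-1} = F(W)$
described above, using Lemma \ref{ProjectivizationIsKaehler} as the inductive step. Here $n = \rk W$, and we adopt the conventions $Q_0 := W$ and $P_0 := M$, so that at stage $i$ one has a vector bundle $Q_i \to P_i$ whose projectivization is $\pi_i \colon P_{i+1} = P(Q_i) \to P_i$. The inductive hypothesis at stage $i$ is that $P_i$ is a \emph{compact} Kähler manifold and that $Q_i \to P_i$ is a holomorphic vector bundle.

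The base case is exactly the assumption of the lemma: $P_0 = M$ is compact Kähler and $Q_0 = W$ is holomorphic. For the inductive step, suppose $P_i$ is compact Kähler and $Q_i \to P_i$ is holomorphic. Then Lemma \ref{ProjectivizationIsKaehler} applies verbatim to $Q_i \to P_i$ and yields that $P_{i+1} = P(Q_i)$ is Kähler and that $\pi_i \colon P_{i+1} \to P_i$ is holomorphic; since $P_i$ is compact and the fibers $\mathbb{C}P^{\,\rk Q_i - 1}$ are compact, $P_{i+1}$ is compact as well. To close the induction it remains to equip $Q_{i+1} = \pi_i^\ast(Q_i)/S_{i+1}$ with a holomorphic structure. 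As $\pi_i$ is holomorphic, $\pi_i^\ast(Q_i)$ is a holomorphic vector bundle over $P_{i+1}$; the relative tautological line subbundle $S_{i+1} \subset \pi_i^\ast(Q_i)$ is a holomorphic subbundle (fiberwise it is the tautological bundle of a complex projective space, and the identifications over overlaps are holomorphic because the transition data of $Q_i$ are); hence the quotient $Q_{i+1}$ inherits a holomorphic structure as a quotient of holomorphic bundles. This establishes the hypothesis at stage $i+1$.

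Iterating $n-1$ times we arrive at $P_{n-1} = F(W)$, which is therefore compact Kähler, proving the lemma. As a byproduct, composing the holomorphic submersions $\pi_0,\ldots,\pi_{n-2}$ shows that $\pi \colon F(W) \to M$ is holomorphic, although this is not needed for the statement.

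The only point that requires any care is the assertion that the relative tautological subbundle $S_{i+1}$ is holomorphic; once this is granted, the argument is a purely formal iteration of Lemma \ref{ProjectivizationIsKaehler}. This fact is standard: it is the statement that the relative $\mathcal{O}(-1)$ on the projective bundle of a holomorphic bundle is holomorphic. Preservation of compactness is immediate, and no subtlety about closedness or positivity of the Kähler forms arises, since Lemma \ref{ProjectivizationIsKaehler} already delivers the Kähler property at each stage.
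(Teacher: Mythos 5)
Your proof is correct and follows essentially the same route as the paper: iterate Lemma \ref{ProjectivizationIsKaehler} along the tower of projectivizations, using at each stage that the pullback bundle, the tautological subbundle, and hence the quotient bundle are holomorphic. The only addition is your explicit remark on preservation of compactness, which the paper leaves implicit.
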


\subsection{A new example}
\label{newexample}

In \cite[Theorem 2]{Eschenburg} and \cite{MR3205800} the Eschenburg flag, that is the biquotient $\SU(n)//S_{k2}$ in the case $n=3$, $k=1$ (with $S_{k2}$ being the torus associated to $\mathfrak{s}_{k2}$ as defined in Section \ref{examplesection}), was written as the projectivization of a holomorphic vector bundle over $\mathbb{C}P^2$. This in particular implies the existence of a Kähler metric on the Eschenburg flag. We wish to extend this strategy to the case $n=4$.

\begin{prop}\label{VB-prop}
For $n\geq 3$, The biquotient $\SU(n)//S_{12}$ is the total space of the flag bundle associated to a complex vector bundle over $\mathbb{C}P^{n-1}$.
\end{prop}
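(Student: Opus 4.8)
The plan is to build the flag-bundle structure directly, by choosing the equal rank extension $H\subset G\times G$ so that $H\cong\U(n-1)$, the fiber $H/T$ of \eqref{eq:fibrationbiquot} is the full flag manifold $F(\mathbb{C}^{n-1})$, and the base is $\mathbb{C}P^{n-1}$. Then the principal $\U(n-1)$-bundle $G\to G//H\cong\mathbb{C}P^{n-1}$ has an associated rank-$(n-1)$ vector bundle $W$, and $G//T$ will be exactly the flag bundle of $W$.

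First we replace $S_{12}$ by the orbit-equivalent torus $T\subset\U(n)\times\U(n)$ used in Section \ref{EBFMomentmap} for $n=3$, whose elements are the pairs $\bigl(\operatorname{diag}(z^2,1,\ldots,1);\,\operatorname{diag}(t_1,\ldots,t_{n-1},z)\bigr)$ with $z=t_1\cdots t_{n-1}$; on Lie algebras its $i$-th circle is $(2e_1;e_i+e_n)$, and these vectors span $\mathfrak s_{12}$ modulo the irrelevant central direction $(e;e)$, so that $G//T=G//S_{12}$. Next we introduce $H\subset\U(n)\times\U(n)$ as the subgroup of pairs
\[
\left(\begin{pmatrix}\det(A)^2& \\ &I_{n-1}\end{pmatrix};\ \begin{pmatrix}A& \\ &\det(A)\end{pmatrix}\right),\qquad A\in\U(n-1),
\]
with $A$ in the upper-left block. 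The elementary verifications are: both factors have determinant $\det(A)^2$, so $H$ preserves $\SU(n)$; taking $A\in\SU(n-1)$ recovers $U=\{e\}\times\SU(n-1)$ and taking $A=\operatorname{diag}(\zeta,1,\ldots,1)$ recovers the circle $S'$ from the proof of Theorem \ref{extensionexistence}, so $H$ has the same orbits on $\SU(n)$ as $S'\cdot U$ and Theorem \ref{extensionexistence} applies --- in particular the $H$-action is free and $G//H\cong\mathbb{C}P^{n-1}$; the assignment $A\mapsto(\text{that pair})$ is an isomorphism $\U(n-1)\xrightarrow{\sim}H$ carrying the diagonal maximal torus $T^{n-1}$ onto $T$; and in particular $T\subset H$.

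It follows that the fiber of \eqref{eq:fibrationbiquot} is $H/T\cong\U(n-1)/T^{n-1}=F(\mathbb{C}^{n-1})$, and, writing $P$ for the principal $\U(n-1)$-bundle $G\to G//H\cong\mathbb{C}P^{n-1}$, the identification $G//T\cong G\times_H H/T$ from Section \ref{construction} becomes $G//T\cong P\times_{\U(n-1)}F(\mathbb{C}^{n-1})$. Now set $W:=P\times_{\U(n-1)}\mathbb{C}^{n-1}$, the rank-$(n-1)$ complex vector bundle over $\mathbb{C}P^{n-1}$ associated to the standard representation; its unitary frame bundle is canonically $P$. It remains to identify $P\times_{\U(n-1)}F(\mathbb{C}^{n-1})$ with the flag bundle $F(W)$ as constructed by iterated projectivization in Section \ref{newexample}. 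This is the functoriality of the flag-bundle construction: unwinding the iterated projectivization gives $F(W)=P_{n-1}=P/T^{n-1}=P\times_{\U(n-1)}\bigl(\U(n-1)/T^{n-1}\bigr)$, which is $G//T$; equivalently one exhibits the isomorphism directly, sending $[g,AT^{n-1}]$ to the flag $g\langle a_1\rangle\subset g\langle a_1,a_2\rangle\subset\cdots$ in the fiber $W_{\pi(g)}$, where $a_1,\ldots,a_{n-1}$ are the columns of $A$ and $g$ is used as a frame of $W_{\pi(g)}$; a short computation shows this is independent of the chosen representative, it restricts on each fiber to the bijection $\Phi$ recalled in Section \ref{kaehler}, and it is therefore an isomorphism of fiber bundles.

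The only genuinely fiddly point I anticipate is the bookkeeping: checking the two orbit-equivalences ($T\sim S_{12}$ and $H\sim S'\cdot U$) carefully enough to invoke Theorem \ref{extensionexistence} for freeness of the $H$-action and for $G//H\cong\mathbb{C}P^{n-1}$, and being precise about matching the homogeneous model $P\times_{\U(n-1)}F(\mathbb{C}^{n-1})$ with the iterated-projectivization model $F(W)$. No analytic input enters this proposition --- it is purely a statement about smooth fiber bundles --- while the K\"ahler consequence is obtained afterward from Lemma \ref{lem:flagkaehler}, once a holomorphic structure on $W$ over $\mathbb{C}P^{n-1}$ has been exhibited.
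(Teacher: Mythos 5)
Your proposal is correct and follows essentially the same route as the paper: the same extension $H\cong\U(n-1)$ given by the pairs $\bigl(\operatorname{diag}(\det(A)^2,1,\ldots,1);\operatorname{diag}(A,\det A)\bigr)$, the same identification of $G//S_{12}\cong G\times_H H/T$ with the flag bundle of the associated bundle $\SU(n)\times_{\U(n-1)}\mathbb{C}^{n-1}$, and the same appeal to Theorem \ref{extensionexistence} for $G//H\cong\mathbb{C}P^{n-1}$; you merely spell out a bit more explicitly the match with the iterated-projectivization model $F(W)$ and the orbit-equivalence bookkeeping (note only that your distinguished circle is the unprimed circle $S$ from that proof rather than $S'$, which is harmless since they have the same orbits).
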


\begin{proof}
Note first that the torus $S\subset \U(n)\times \U(n)$ with Lie algebra \[
\left\langle \left(2e_1;e_1+e_n\right),(0;e_2-e_1),\ldots,(0;e_{n-1}-e_1)\right\rangle\]
acts on $\SU(n)$ in the usual double-sided fashion and that $\SU(n)//S=\SU(n)//S_{12}.$ We consider the equal rank extension $H$ of matrix tuples of the form
\[\left(\begin{pmatrix}
\det(A)^2 & 0\\ 0& I_{n-1}
\end{pmatrix},\begin{pmatrix}
A& 0\\0& \det(A)
\end{pmatrix}\right)
\]
with $A\in \U(n-1)$.  The group $H$ acts freely on $\SU(n)$ and as before we write $\SU(n)//S= \SU(n)\times_H H/S$. Note that sending a tuple of $H$ written as above to the matrix $A$ defines an isomorphism $H\cong \U(n-1)$ under which $S$ maps to the standard diagonal torus $T^{n-1}$. In particular, using the bijection $\Phi$ of the previous section,
\[\SU(n)//S\cong \SU(n)\times_{\U(n-1)} \U(n-1)/T^{n-1}\]
is the flag bundle associated to the complex bundle $\SU(n)\times_{\U(n-1)}\mathbb{C}^{n-1}$. For the identification $\SU(n)//H\cong \mathbb{C}P^{n-1}$ one proceeds as in the proof of Theorem \ref{extensionexistence}.
\end{proof}

\begin{cor}
The biquotients $\SU(3)//S_{12}$ and $\SU(4)//S_{12}$ admit a K\"ahler structure.
\end{cor}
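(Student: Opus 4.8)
The plan is to obtain this as a formal consequence of Proposition \ref{VB-prop} and Lemma \ref{lem:flagkaehler}, the only real content being the verification that a certain vector bundle is holomorphic. Applying Proposition \ref{VB-prop} with $n=3$ and $n=4$ gives diffeomorphisms $\SU(3)//S_{12}\cong F(W_3)$ and $\SU(4)//S_{12}\cong F(W_4)$, where $W_3=\SU(3)\times_{\U(2)}\mathbb{C}^2$ is a rank $2$ complex vector bundle over $\mathbb{C}P^2$ and $W_4=\SU(4)\times_{\U(3)}\mathbb{C}^3$ is a rank $3$ complex vector bundle over $\mathbb{C}P^3$. Since $\mathbb{C}P^{n-1}$ is compact Kähler, Lemma \ref{lem:flagkaehler} reduces both assertions to the single claim that $W_3$ and $W_4$ admit holomorphic structures. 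It is worth recording at this point that $F(W)\cong F(W\otimes L)$ for any line bundle $L$ over the base (a complete flag in a fibre $V$ is the same as a complete flag in $V\otimes L$), so one is free to twist $W_3$ and $W_4$ by powers of the hyperplane bundle before producing a holomorphic structure.

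For $W_3$ nothing new is needed: in \cite[Theorem 2]{Eschenburg} and \cite{MR3205800} the Eschenburg flag is written as the projectivization of a \emph{holomorphic} rank $2$ bundle over $\mathbb{C}P^2$, which is exactly the required statement. Alternatively one may invoke the classical fact (Schwarzenberger) that every topological complex vector bundle over the smooth projective surface $\mathbb{C}P^2$ carries an algebraic structure, so that $W_3$ is automatically holomorphic irrespective of its topological type.

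The substantive part is $W_4$, and here I would follow the strategy of \cite{Eschenburg,MR3205800} and make everything explicit. First, from the proof of Proposition \ref{VB-prop} (which itself follows the proof of Theorem \ref{extensionexistence}) one extracts an explicit finite trivializing atlas for the principal $\U(3)$-bundle $\SU(4)\to\SU(4)//H\cong\mathbb{C}P^3$, adapted to the standard affine charts of $\mathbb{C}P^3$; since the identification $\SU(4)//H\cong\mathbb{C}P^3$ sends $H\cdot g$ to a column of $g$ with three of its four entries conjugated, some care is required and it may be cleanest to work with $\mathbb{C}P^3$ carrying the conjugate complex structure, which is of course still biholomorphic to $\mathbb{C}P^3$. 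Next, compute the resulting $\mathrm{GL}(3,\mathbb{C})$-valued transition functions of $W_4$, possibly after tensoring with a suitable $\mathcal{O}(m)$, and check that they can be chosen to depend holomorphically on the base coordinates; this equips $W_4$ with a holomorphic structure, and Lemma \ref{lem:flagkaehler} then produces a Kähler structure on $F(W_4)\cong\SU(4)//S_{12}$.

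The main obstacle is precisely this last verification. In contrast to the surface case, not every topological rank $3$ bundle over $\mathbb{C}P^3$ admits a holomorphic structure, so no soft argument is available and the holomorphicity of $W_4$ genuinely has to be read off from the transition functions. Moreover $W_4$ is not simply a twist $Q\otimes L$ of the tautological quotient bundle $Q$ of $\mathbb{C}P^3$: one would have $F(Q)\cong F(T\mathbb{C}P^3)\cong\SU(4)/T^3$, the homogeneous full flag manifold, which is not diffeomorphic to the biquotient $\SU(4)//S_{12}$. Thus $W_4$ is a genuinely ``exotic'' bundle, and it is exactly the fact that its holomorphicity must be checked by hand — with no analogue of the two-dimensional soft statement in dimension $3$ and higher — that confines the corollary to $n=3,4$ and leaves the general case as a conjecture.
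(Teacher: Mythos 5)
Your argument for $\SU(3)//S_{12}$ is fine and matches the paper's (holomorphicity of $W_3$ over the surface $\mathbb{C}P^2$ by Schwarzenberger, or directly from \cite{Eschenburg,MR3205800}), but for $\SU(4)//S_{12}$ your proposal has a genuine gap: the only substantive step, namely that $W_4=\SU(4)\times_{\U(3)}\mathbb{C}^3$ over $\mathbb{C}P^3$ carries a holomorphic structure, is never actually established. You outline a plan --- extract an explicit trivializing atlas from the proof of Proposition \ref{VB-prop}, compute the $\mathrm{GL}(3,\mathbb{C})$-valued transition functions, and ``check that they can be chosen to depend holomorphically on the base coordinates'' --- and then yourself identify this verification as the main obstacle without carrying it out. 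The trivializations produced by the group-theoretic description are a priori only smooth, and there is no reason offered why they (or any modification of them) should be holomorphic; arranging that is essentially the whole problem, so as written this is a proof strategy, not a proof.

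Moreover, the premise on which you base the necessity of this hard computation is incorrect: it is a known theorem that \emph{every} topological complex vector bundle over $\mathbb{C}P^n$ with $n\le 3$ admits a holomorphic structure (see \cite[Chapter I, \S 6]{MR2815674}; the rank~$2$ case on $\mathbb{C}P^3$ is due to Atiyah--Rees, and the result as stated there covers the rank~$3$ bundle $W_4$). This soft existence statement is exactly how the paper concludes: Proposition \ref{VB-prop} exhibits $\SU(n)//S_{12}$ as the flag bundle of a complex vector bundle over $\mathbb{C}P^{n-1}$, the cited theorem provides a holomorphic structure on that bundle for $n=3,4$, and Lemma \ref{lem:flagkaehler} gives the K\"ahler structure. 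Your claim that ``no soft argument is available'' over $\mathbb{C}P^3$ (and your resulting explanation of why the corollary stops at $n=4$) is therefore wrong; the actual reason the argument stops at $n=4$ is that the existence of holomorphic structures on arbitrary topological bundles is unknown or false over $\mathbb{C}P^{n-1}$ for $n\ge 5$, not that $\mathbb{C}P^3$ already requires a bespoke computation. If you want to salvage your explicit approach, you would have to actually produce holomorphic transition functions for $W_4$ (or identify $W_4$ with a known holomorphic bundle), which you have not done.
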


\begin{proof}
  From \cite[Chapter I, §6]{MR2815674} every (topological) vector bundle over $\mathbb{C}P^2$ or $\mathbb{C}P^3$ possesses   a holomorphic structure. Proposition \ref{VB-prop}, combined with Lemma \ref{lem:flagkaehler}, yields the result.
\end{proof}

\begin{rem}
Other biquotients can be regarded from a similar point of view. For instance, for $G=\SP(n)$ and the torus $S_2 = S\times T^{n-1}\subset S\times \SP(n-1)$, the bundle
\[
\SP(n)//S_2 = S\setminus \SP(n) \times_{\SP(n-1)} \SP(n-1)/T^{n-1}\longrightarrow S\setminus \SP(n)/\SP(n-1) \cong {\mathbb{C}} P^{2n-1}
\]
is the bundle of full isotropic flags (with respect to a complex symplectic form) in the complex vector bundle $S\setminus \SP(n) \times_{\SP(n-1)} {\mathbb{C}}^{2n-2} \to {\mathbb{C}}P^{2n-1}$. We expect that one might use this description to find a K\"ahler structure on this space, provided that one can show that this bundle and the complex symplectic form are holomorphic.
\end{rem}

\subsection{The Hard Lefschetz property}
It is natural to ask whether the existence of Kähler structures on biquotients can be excluded due to topological obstructions. The most prominent algebraic topological feature of compact K\"ahler manifolds is the fact that their cohomology algebras satisfy the Hard Lefschetz property (HLP). Recall the following

\begin{defn}
A commutative graded algebra $A$ is said to have the Hard Lefschetz property if there is an element $\omega\in A^2$ such that, for some fixed $n$ and all $k\geq 0$, multiplication with $\omega^k$ induces an isomorphism $A^{n-k}\cong A^{n+k}$. In this case we call $\omega$ a Hard Lefschetz element.
\end{defn}

The study of the HLP and more general Lefschetz properties on certain algebras is an active field of research in commutative algebra. However, results that prove the HLP for whole families of algebras seem to be restricted to very special cases like e.g.\ quotients of polynomial rings by monomial ideals \cite{MR3161738, MR3112920}. The relations appearing in the cohomology rings of biquotients of simple Lie groups (cf.\ Section \ref{examplesection}) are much more complicated and we were not able to prove the HLP for one of the listed families. The purpose of this section is rather to give examples from all families in which the HLP does hold, backing up the authors suspicion that indeed all of the spaces satisfy the HLP and might even be Kähler.

\paragraph*{Cohomology rings of biquotients.}
Before we can give the examples, we need to understand the cohomology rings of the spaces in question. They are easily computable as was shown e.g.\ in \cite{MR1160094} and \cite{MR1238411} using spectral sequences or in \cite{KapovitchRHTBiquotients} via rational homotopy. Let $S(\cdot)$ denote the symmetric algebra. Simplified and tailored to the situation from Section \ref{examplesection} one has

\begin{thm}\label{cohomologyofbiquot}
Let $T\subset G$ a maximal torus and suppose $S\subset T\times T$ induces a free double-sided action on $G$ where $\mathrm{rank}(S)=\mathrm{rank}(G)$. Let $i:\mathfrak{s}\ra \mathfrak{t}\times \mathfrak{t}$ denote the inclusion of Lie algebras. Then
\[H^*(G//S)\cong S(\mathfrak{s}^*)/\left(i^*(\delta_1),\ldots,i^*(\delta_n)\right),\]
where $\delta_i=\sigma_i\otimes 1-1\otimes \sigma_i\in S(\mathfrak{t}^*\times\mathfrak{t}^*)$ for the polynomial generators $\sigma_i$ of the Weyl-invariant polynomials in $S(\mathfrak{t}^*).$
\end{thm}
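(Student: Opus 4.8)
The plan is to compute $H^*(G//S)$ as the cohomology of the Borel construction, using that an equal-rank free double-sided action allows us to trade the biquotient for a comparison of classifying spaces. First I would model $G//S$ as the homotopy quotient: since $S$ acts (almost) freely on $G$, we have $G//S \simeq_{\mathbb{Q}} ES \times_S G$, where $S$ acts on $G$ via $(s_l,s_r)\cdot g = s_l g s_r^{-1}$. The key observation is that this Borel construction fits into a pullback/fibration picture involving $BG$: the two-sided action factors through $S \subset T\times T \to G\times G$, and $EG\times_{G\times G} (G\times G) \simeq \mathrm{pt}$ reflects the fact that $G$ with the two-sided $G\times G$-action is a principal-type space whose Borel construction is $BG$ via the diagonal. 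Concretely, $E(T\times T)\times_{T\times T} G \to BT\times BT$ sits over the diagonal map $BG \to BG\times BG$ (induced from the fibration $G \to EG\times_{G\times G}(G\times G)\times\cdots$), and one gets $H^*(E(T\times T)\times_{T\times T}G) \cong H^*(BT\times BT)\otimes_{H^*(BG\times BG)} H^*(BG)$, where $H^*(BG)\to H^*(BG\times BG)$ is via the diagonal. Since $H^*(BG)\cong S(\mathfrak{t}^*)^W$ with generators $\sigma_i$, the diagonal sends $\sigma_i \mapsto \sigma_i\otimes 1$ and the ``difference'' relations $\sigma_i\otimes 1 - 1\otimes\sigma_i$ are exactly what must be killed, giving $H^*(BT\times BT)/(\delta_1,\ldots,\delta_n) = S(\mathfrak{t}^*\times\mathfrak{t}^*)/(\delta_i)$.

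Next I would restrict from $T\times T$ to $S$. Because $\mathrm{rank}(S) = \mathrm{rank}(G) = n$ and the action is free, the fibration $S \to T\times T \to (T\times T)/S$ has $(T\times T)/S$ a torus of dimension $n$, so $B((T\times T)/S) \to BS \to B(T\times T)$ is a fibration whose base has cohomology a polynomial ring on $n$ generators. The Eilenberg--Moore or Serre spectral sequence for pulling the above Borel construction back along $BS \to B(T\times T)$ degenerates here for degree reasons — everything is concentrated in even degrees by the cited results of \cite{MR1160094} and \cite{KapovitchRHTBiquotients} (Theorem \ref{cohomologyofbiquot} is stated after invoking precisely this evenness) — yielding
\[
H^*(G//S) \cong H^*(BS)\otimes_{H^*(B(T\times T))}\bigl(S(\mathfrak{t}^*\times\mathfrak{t}^*)/(\delta_1,\ldots,\delta_n)\bigr) \cong S(\mathfrak{s}^*)/\bigl(i^*\delta_1,\ldots,i^*\delta_n\bigr),
\]
where $i^*: S(\mathfrak{t}^*\times\mathfrak{t}^*)\to S(\mathfrak{s}^*)$ is the surjection dual to $i:\mathfrak{s}\hookrightarrow \mathfrak{t}\times\mathfrak{t}$; surjectivity of $i^*$ (as $\mathfrak{s}$ is a direct summand) is what lets the tensor product collapse to a plain quotient.

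I expect the main obstacle to be making the first step — identifying the Borel construction of $G$ under the two-sided torus action with $S(\mathfrak{t}^*\times\mathfrak{t}^*)/(\delta_i)$ — fully rigorous, in particular justifying the Eilenberg--Moore collapse and the $\mathrm{Tor}$-vanishing so that the relative tensor product is just a quotient. The cleanest route is to cite the Eilenberg--Moore spectral sequence for the pullback square
\[
\xymatrix{
ES\times_S G \ar[r]\ar[d] & EG \ar[d] \\
BS \ar[r] & BG\times BG
}
\]
(using $EG\times_{G\times G} G \simeq \mathrm{pt}$ so that $EG \to BG\times BG$ with homotopy fiber $G$ models the situation), observe that $H^*(BG) = S(\mathfrak{t}^*)^W$ is a free module over itself and everything in sight has cohomology free over the relevant base, so $\mathrm{Tor}$ in positive degrees vanishes and the spectral sequence collapses at $E_2$ with no extension problems — precisely because cohomology is concentrated in even degrees. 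Alternatively, one can bypass the homotopy-theoretic machinery and argue directly with the Cartan model / Chevalley restriction, but the spectral-sequence argument is the one that connects most transparently to the already-cited references \cite{MR1160094}, \cite{MR1238411}, \cite{KapovitchRHTBiquotients}. The remaining manipulations — computing $i^*\delta_i$ and noting $\delta_i = \sigma_i\otimes 1 - 1\otimes\sigma_i$ — are purely formal once the structural identification is in place.
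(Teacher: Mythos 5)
Two remarks before the main point. The paper itself does not prove Theorem \ref{cohomologyofbiquot}: it quotes it from \cite{MR1160094}, \cite{MR1238411} and \cite{KapovitchRHTBiquotients}, and the route you chose (identify $G//S$ rationally with the homotopy pullback of $BS\to BG\times BG \leftarrow BG$ and run Eilenberg--Moore, or equivalently the Borel fibration $G\to ES\times_S G\to BS$) is exactly the route of those references, so the overall architecture is the right one. However, as written your argument has one wrong identification and one genuine gap at the crucial step. First, the corner of your pullback square is off: $E(G\times G)\times_{G\times G}G$ is not a point -- the two-sided $G\times G$-action on $G$ is transitive with stabilizer the diagonal $\Delta G$, so this Borel construction is $B\Delta G\simeq BG$, mapping to $BG\times BG$ by the diagonal. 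This is not cosmetic: with a point in that corner the homotopy pullback over $BS$ would be the homotopy fiber $(G\times G)/S$ rather than $G//S$, and the diagonal $BG$ is precisely what produces the relations $\delta_i=\sigma_i\otimes 1-1\otimes\sigma_i$ in the first place (your earlier sentence about $H^*(BT\times BT)\otimes_{H^*(BG\times BG)}H^*(BG)$ has the right idea, but the displayed square contradicts it).

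Second, and more seriously, your justification of the collapse does not hold up. Neither $H^*(BS)=S(\mathfrak{s}^*)$ nor $H^*(BG)$ is free over $H^*(BG\times BG)$, so ``everything in sight has cohomology free over the relevant base'' does not give the vanishing of higher $\mathrm{Tor}$; and deducing the degeneration from evenness of $H^*(G//S)$ as ``shown in \cite{MR1160094} and \cite{KapovitchRHTBiquotients}'' is circular here, since those references obtain evenness by proving precisely the theorem you are asked to prove. The missing ingredient -- and the only place where the hypotheses ``the action is free'' and ``$\rk(S)=\rk(G)$'' actually enter -- is that the $n$ elements $i^*(\delta_1),\dots,i^*(\delta_n)$ form a regular sequence in the polynomial ring $S(\mathfrak{s}^*)$ in $n$ variables. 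One gets this from compactness: resolving $H^*(BG)$ over $H^*(BG\times BG)$ by the Koszul complex on the regular sequence $\delta_1,\dots,\delta_n$ (equivalently, in the Serre spectral sequence of $G\to ES\times_S G\to BS$ the exterior generators of $H^*(G)$ transgress, by naturality from the universal case over $BG\times BG$, to $i^*(\delta_j)$), the image of $H^*(BS)\to H^*(G//S)$ is $S(\mathfrak{s}^*)/\bigl(i^*(\delta_1),\dots,i^*(\delta_n)\bigr)$; since the free action makes $G//S$ a closed manifold, this quotient is finite dimensional, hence the $i^*(\delta_j)$ are a homogeneous system of parameters and therefore a regular sequence. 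Only then does the Koszul homology (the higher $\mathrm{Tor}$, i.e.\ the rest of the spectral sequence) vanish, giving $H^*(G//S)\cong S(\mathfrak{s}^*)/\bigl(i^*(\delta_1),\dots,i^*(\delta_n)\bigr)$. With these two repairs your outline becomes the standard proof; without them the key degeneration is unjustified.
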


The presentation of the biquotients in Section \ref{examplesection} comes with bases for $\mathfrak{s}$ and $\mathfrak{t}$ which we fix as bases here. If we denote the corresponding dual basis of $\mathfrak{t}^*$ by $y_1,\ldots y_n$, then the $\sigma_i$ are given by
\begin{itemize}
\item $G=\SU(n)$: The elementary symmetric polynomials of degree $2,\ldots,n$ in the variables $y_1,\ldots,y_n$.
\item $G=\spin(2n+1),\SP(n)$: The elementary symmetric polynomials of degree $1,\ldots,n$ in the variables $y_1^2,\ldots,y_n^2$.
\item $G=\spin(2n)$: The elementary symmetric polynomials of degree $1,\ldots,n-1$ in the variables $y_1^2,\ldots,y_n^2$ as well as the $n$th elementary symmetric polynomial in the variables $y_1,\ldots,y_n$.
\end{itemize}

In particular, it follows that the matching biquotients of $\spin(2n+1)$ and $\SP(n)$ have isomorphic cohomology. Using more general formulations of Theorem \ref{cohomologyofbiquot} one could even see that they have the same rational homotopy type.\\
We are now ready to give explicit examples of elements in $H^2(G//S)$ that are Hard Lefschetz elements. Let $G$ be simple and $S$ be one of the tori $S_k$ (resp.\ $S_{kl}$) from Section \ref{examplesection}. Also let $x_1,\ldots,x_n$ be the dual basis of the basis used for the definition of the Lie algebra $\mathfrak{s}$ of $S$ in \ref{examplesection}. Then, via the isomorphism of Theorem \ref{cohomologyofbiquot}, the element $\omega=\sum_{i=1}^nix_i$ induces an element of $H^2(G//S)$. In the cases where $n\leq 5$, the element $\omega$ is a Hard Lefschetz element. In particular we have the following

\begin{prop}
Let $G$ be simple, $\mathrm{rank}(G)\leq 5$ and $G//S$ a biquotient with $S$ a torus of maximal rank. Then $H^*(G//S)$ satisfies the Hard Lefschetz property.
\end{prop}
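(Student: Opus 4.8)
The plan is to reduce the statement to finitely many explicit graded algebras and then settle the Hard Lefschetz property in each of them separately. The key observation is that, by Theorem~\ref{cohomologyofbiquot}, the real cohomology algebra $H^*(G//S)$ depends only on the Lie algebra $\mathfrak{g}$ (through its Weyl invariants $\sigma_j$) and on the inclusion $i\colon\mathfrak{s}\hookrightarrow\mathfrak{t}\times\mathfrak{t}$; after conjugating we may assume $S\subset T\times T$, so the hypothesis of that theorem is harmless. Replacing $G$ by its universal cover $\hat G$ and $S$ by the identity component $\hat S_0$ of its preimage yields a biquotient $\hat G//\hat S_0$ with the same Lie algebra data, hence with isomorphic real cohomology algebra, and by the discussion of coverings in Section~\ref{examplesection} this $\hat G//\hat S_0$ is --- up to diffeomorphism --- one of the biquotients classified there. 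We are therefore reduced to the case in which $G$ is simply connected and $G//S$ is either the full flag manifold $G/T$ (the standard left-multiplication action; this in particular disposes of the exceptional groups $G_2$ and $F_4$ of rank at most $5$, which admit no other free double-sided torus action of maximal rank) or one of $\SU(n)//S_{k1}$, $\SU(n)//S_{k2}$, $\spin(2n)//S_i$, $\spin(2n+1)//S_i$ or $\SP(n)//S_i$ with $\mathrm{rank}(G)\le 5$ --- finitely many spaces, and fewer still since matching biquotients of $\spin(2n+1)$ and $\SP(n)$ have isomorphic cohomology.

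For the flag manifold $G/T$ there is nothing to do: it is a smooth projective variety, hence compact K\"ahler, and the cohomology algebra of a compact K\"ahler manifold satisfies the HLP with any K\"ahler class as Hard Lefschetz element. For each remaining biquotient I would write $H^*(G//S)$ down explicitly via Theorem~\ref{cohomologyofbiquot} as the quotient of $S(\mathfrak{s}^*)=\mathbb{R}[x_1,\dots,x_n]$ by the ideal generated by the $i^*(\delta_j)$, with the $\sigma_j$ as listed after that theorem. This is a finite-dimensional graded Poincar\'e duality algebra of some formal dimension $2d$, so it suffices to check, for each $k$, that multiplication by $\omega^k$ from degree $d-k$ to degree $d+k$ is injective, where $\omega=\sum_{i=1}^n i x_i$; Poincar\'e duality then upgrades injectivity to an isomorphism. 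With monomial bases chosen in each degree this reduces to computing the ranks of a finite list of explicit integer matrices --- doable by hand in low rank and by computer algebra when $\mathrm{rank}(G)=5$, where the groups $\SU(6)$, $\spin(10)$, $\spin(11)$ and $\SP(5)$ produce the largest algebras.

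I expect this last step to be the main obstacle. There is no reason visible a priori why the particular class $\omega=\sum i x_i$ --- or any single class --- should be Lefschetz for these ideals; as is noted in the text, the relations here are far more intricate than the monomial ideals for which Lefschetz properties are understood in general, and no uniform argument covering a whole family is available. The proof is thus genuinely a finite case check, and what makes it feasible is precisely the rank bound, which leaves only finitely many algebras, each of manageable (if, in rank $5$, somewhat large) dimension.
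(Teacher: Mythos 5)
Your proposal follows essentially the same route as the paper: after reducing to the finitely many classified biquotients (with the homogeneous and exceptional cases dispatched via the K\"ahler structure on $G/T$), you verify by a finite computer-algebra check that the specific class $\omega=\sum_i i\,x_i$ is a Hard Lefschetz element, which is exactly what the paper does by reducing the check to an ideal-membership test in $\mathbb{R}[x_1,\dots,x_n]/(f_1,\dots,f_n,\omega^k)$ carried out in Macaulay2. The only cosmetic difference is that you test injectivity of multiplication by $\omega^k$ from degree $d-k$ to $d+k$ and then invoke Poincar\'e duality, whereas the paper tests surjectivity onto degree $d+k$ (equivalently, vanishing of the corresponding graded piece of the enlarged quotient), which is the computationally more convenient but equivalent formulation.
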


\begin{rem}
The bases $x_i$ are chosen noncanonically for every torus $S$. As expected, the sets of coefficients $\alpha_i$ for which $\sum \alpha_i x_i$ induces a Hard Lefschetz element of $H^2(G//S)$ do in general not coincide for different $S$, even for small ranks. It is therefore rather surprising that the coefficients defining $\omega$ above work for all those tori simultaneously.
\end{rem}

The calculations to verify that $\omega$ is indeed a Hard Lefschetz element are too lengthy to be displayed here explicitly. To compensate, we discuss below how the question of whether a certain element of an algebra as above is Hard Lefschetz can be reduced to a problem of ideal membership which is solvable with any standard computer algebra software. Additionally, we demonstrate how to verify the proposition, using the freely available software Macaulay2 \cite{M2}. While we only display the code for the Torus $S_1$ of $G=\SP(n)$ and $G=\spin(2n+1)$, all remaining cases can be checked with only slight modifications.

\paragraph*{Testing for Hard Lefschetz elements.}
The cohomology algebras of the biquotients above are of the form $H=\mathbb{R}[x_1,\ldots, x_n]/(f_1,\ldots,f_n)$ where the $x_i$ are of degree 2 and the $f_i$ are homogeneous polynomials (in fact the $f_i$ form a regular sequence). We have the additional information that $H$ satisfies Poincaré duality with fundamental class in even degree $2m$.
For dimensional reasons, an element $\omega\in H^2$ is Hard Lefschetz if and only if multiplication with $\omega^k$ is surjective onto the degree $m+k$ component of $H$.

Let $p\in \mathbb{R}[x_1,\ldots,x_n]$ be a representative of $\omega$. The cokernel of multiplication by $\omega^k$ is just $\mathbb{R}[x_1,\ldots,x_n]/(f_1,\ldots,f_n,p^k)$.
Thus $\omega$ is Hard Lefschetz if and only if for $k=1,\ldots,m$ the degree $m+k$ component of $\mathbb{R}[x_1,\ldots,x_n]/(f_1,\ldots,f_n,p^k)$ is trivial. This is of course equivalent to the question of whether all degree $m+k$ monomials lie in the ideal $(f_1,\ldots,f_n,p^k)$.

We present in the following lines a Macaulay2 algorithm for testing the Hard Lefschetz
property.
\begin{verbatim}
-- Sp(n) or Spin(2n+1)
n = 3; -- dimension of a maximal torus, change at will

R=QQ[x_1..x_n,Degrees=> for i from 1 to n list 2];
S=QQ[y_1..y_(2*n)]

m=n^2; -- Dimension of biquotient / 2

-- the x_0-th elementary symmetric polynomial evaluated at x_1,...,x_n.
sigma = x -> (
    if (not class x === Sequence) or #x < 2 or x_0 < 0 then (
	error "Wrong input data";
	return 0;
    );
  i := x_0;
  n := #x-1;
  args := toList drop(x,1);
  args' := drop(args,-1);
  if i > n then return 0;
  if i == 0 then return 1;
  if i == 1 then return sum args;
  return x_n*sigma(toSequence prepend(i-1,args'))
      +sigma(toSequence prepend(i,args'));
);

-- create lists with squared generators
L_1 = for i from 1 to n list y_i^2;
L_2 = for i from n+1 to 2*n list y_i^2;
for i from 1 to n do (
  d_i=sigma(prepend(i, toSequence L_1)) - sigma(prepend(i, toSequence L_2));
	);

-- Linear transformation between the symmetric algebras
-- (see Theoerem 4.7) for S_1

M = for i from 1 to n-1 list 0;
for i from 1  to n do M=append(M,x_i);
M=append(M,sum for i from 2 to n list -x_i);
g=map(R,S, M);

--create polynomial relations of the cohomology
I = for i from 1 to n list g(d_i);
-- define omega
w = sum for i from 1 to n list i*x_i;
-- main algorithm
for k from 1 to m do (
  j=basis(m+k,R/ideal(join(I,{w^k})));
  if j != 0 then break;
	);

if  j == 0 then print "Hard Lefschetz!" else print "Not Hard Lefschetz!";

quit()
\end{verbatim}
\bibliographystyle{acm}
\bibliography{BiquotSymplectic}
\end{document}